\makeatletter \@addtoreset{equation}{section} \makeatother
\renewcommand\thetable{\thesection.\@arabic\c@table}
\theoremstyle{plain}
\newtheorem{maintheorem}{Theorem}
\newtheorem{theorem}{Theorem }[section]
\newtheorem{proposition}[theorem]{Proposition}
\newtheorem{lemma}[theorem]{Lemma}
\newtheorem{corollary}[theorem]{Corollary}
\theoremstyle{definition} \theoremstyle{remark}
\newtheorem{remark}[theorem]{Remark}
\newtheorem{question}{Question}
\newcommand{\htop}{h_{\topp}}
\newcommand{\be} {\beta}
\newcommand{\ep} {\epsilon}
\newcommand{\vep}{\varepsilon}
\newcommand{\la} {\lambda}
\newcommand{\Z}{\mathbb{Z}}
\newcommand{\N}{\mathbb{N}}
\newcommand{\topp}{\operatorname{top}}
\newcommand{\hW}{\widehat{\cW}}
\newcommand{\cW}{\mathcal{W}}
\title{Entropy of partially hyperbolic flows with center dimension two}
\author{M. Roldan, R. Saghin and J. Yang}
\date{\today}
\begin{document}

\begin{abstract}
In this article we study the regularity of the topological and metric entropy of partially hyperbolic flows with two-dimensional center direction. We show that the topological entropy is upper semicontinuous with respect to the flow, and we give an example where the lower semicontinuity fails. We also show that if such a flow has no fixed points, then it is entropy expansive, and consequentely the metric entropy function is upper semicontinuous, there exist equilibrium states (and measures of maximal entropy), and principal symbolic extensions. 
\end{abstract}

\maketitle

\section{Statements and results}

The topological and metric entropy are important invariants in dynamical systems, they measure the complexity of the systems, and they were extensively studied during the last decades. On one hand one would like to understand the dependence of topological entropy with respect to perturbations of the system, and there are many studies in this direction, starting with works by Yomdin and Newhouse. On the other hand the dependence of the metric entropy with respect to the measure is important in the study of the pressure and equilibrium states, and it was extensively studied, starting with the work of Bowen. 

In general the entropy can fail to be continuous, however if we restrict our attention to systems which are smooth enough or have enough hyperbolicity, many positive results can be obtained. It is known the topological entropy is upper semicontinuous in the space of $C^{\infty}$ diffeomorphisms (\cite{Buzzi}), or $C^1$ diffeomorphisms away from tangency (\cite{LVY}). For these systems we also know that the metric entropy function is upper semicontinuous. In fact we know more, the $C^{\infty}$ maps are assymptotic entropy expansive, while the $C^1$ diffeomorphisms away from tangencies are entropy expansive. On the other hand the lower semicontinuity of the topological entropy holds for $C^{1+\alpha}$ diffeomorphisms on surfaces (\cite{Katok}).

Uniformly hyperbolic diffeomorphisms are very well behaved in this aspect, the topological entropy is locally constant, and they are expansive. We also have various results for partially hyperbolic diffeomorphisms with one dimensional center bundle. By \cite{LVY} (or previously \cite{DF}, \cite{DFPV}), they are entropy expansive, so the topological entropy and the metric entropy function are upper semicontinuous. For the known examples of partially hyperbolic diffeomorphisms with one dimensional center we know that the topological entropy is also lower semicontinuous, so in fact it is continuous in the $C^1$ topology (see \cite{SY}). In the case of partially hyperbolic diffeomorphisms with two dimensional center all the continuity properties of the entropy may fail. In the absence of a dominated splitting or some hyperbolicity of the center, one expects that $C^1$ generically there are persistent homoclinic tangencies, so the maps are not entropy expansive, and the metric entropy function is not upper semicontinuous (see \cite{DN,Asaoka,DF,DFPV,CT,BCF}). One can construct examples where the upper semicontinuity of the topological entropy fails in the $C^r$ topology (using homoclinic tangencies), and examples where the lower semicontinuity of the topological entropy fails even in the $C^{\infty}$ topology (see \cite{HSX} for example). On the other hand, it is worth mentioning that in the lack of any dominated splitting, the topological entropy is continuous $C^1$ generically (\cite{BCF}).

The goal of this paper is to extend these studies to the case of partially hyperbolic flows with two dimensional center bundle. In some sense, the partially hyperbolic flows with two dimensional center can be considered in between the partially hyperbolic diffeomorphisms with one and two dimensional center. Thus one expects that some of the properties that hold for partially hyperbolic diffeomorphisms with one dimensional center will also hold in our case, while other properties may fail. We will see that this is the case for the continuity properties of the entropy.

Let us comment that even though there are several results in the literature on the regularity of the entropy for partially hyperbolic diffeomorphisms, they do not contain the partially hyperbolic flows which we consider (or the time one map of these flows). Entropy expansiveness holds if the center bundle has a dominated splitting into one dimensional subbundles (\cite{DFPV}), or more generally it is far from tangencies (\cite{LVY}), or if all the Lyapunov exponents in the center direction for all invariant measures are nonpositive (or non-negative) (\cite{CY}). In \cite{Yang} the author obtains the entropy expansiveness and even the continuity of the topological entropy for Lorenz-like flows (including partially hyperbolic flows with 1D center). In general the time one map of a partially hyperbolic flow with 2 dimensional center does not fall in any of these categories.

Now we will recall some necessary notions and state our main results. Given a $C^1$ vector field $X\in \mathfrak{X}^1(M)$ on a compact Riemannian manifold $M$ we denote by $(\phi_t^X)_{t\in\mathbb R}$ the flow associated to $X$. If it is not necesary to mention $X$ we will write just $\phi_t$.

We say that a $D\phi_t$-invariant splitting $T_\Lambda M=E\oplus F$ is dominated if there are $C>0$ and $0<\la<1$ so that for all $t\ge 0$ and all $x\in \Lambda$, we have
\begin{equation}\label{eq:domination}
\frac{\|D\phi_t(x)\mid_{E_x}\|}{\|D\phi_{-t}(\phi_t(x))\mid_{F_{\phi_t(x)}}\|^{-1}} \le C \lambda^t.
\end{equation}

We say that a compact $(\phi_t)$-invariant set $\Lambda\subset M$ is partially hyperbolic if there exists a $D\phi_t$-invariant splitting $T_\Lambda M=E^s \oplus E^c \oplus E^u$, and constants $C>0$, $0<\lambda<1$ so that $\|D\phi_t(x)\mid_{E^s_x}\| \leq C\la^t$, $\|(D\phi_t(x)\mid_{E^u_x})^{-1}\| \leq C\la^t$ for all $x\in \Lambda$ and $t\ge 0$, and the decompositions $E^s\oplus E^c$ and $E^c \oplus E^u$ are dominated. If $\Lambda$ is the whole manifold $M$ then we say that $\phi_t$ is partially hyperbolic. In the definition of partial hyperbolicity one could allow that $E^s$ or $E^u$ be trivial, but not both. Moreover, notice that $E^c$ is trivial if and only if $\Lambda$ is a singular set since otherwise it contains the subspace generated by the vector field.

An equivalent definition of partial hyperbolicity would be that the time-$t$ map $\phi_t$ of the flow is partially hyperbolic, for any (or for some) $t>0$, in the classical sense of partially hyperbolic diffeomorphisms. It is known that the set of partially hyperbolic flows is $C^1$ open, and the invariant splitting depends continuously on the point and on the vectorfield generating the flow (again in the $C^1$ topology). It is known that the stable bundle $E^s$ and the unstable bundle $E^u$ integrate to stable and unstable foliations, $W^s$ and $W^u$. In general $E^c$ may not be integrable, but as we said it should always contain the direction of the flow $X$, so there exists at least a one dimensional foliation tangent to $E^c$ outside the singularities of $X$.

The topological entropy of a flow is equal to the topological entropy of the time one map of the flow, and satisfies the relation
$$
h_{top}(\phi)=h_{top}(\phi_1)=\frac 1{|t|}h_{top}(\phi_t)\hbox{ for every }t\neq 0.
$$

A Borel measure $\mu$ on $M$ is {\it invariant under $\phi$} if it is invariant under $\phi_t$ for any $t\in\mathbb R$. In particular it will be invariant under $\phi_1$, and the metric entropy of $\phi$ with respect to $\mu$ is 
$$
h_{\mu}(\phi)=h_{\mu}(\phi_1)=\frac 1{|t|}h_{\mu}(\phi_t)\hbox{ for every }t\neq 0.
$$
Even if it is not necesary that every $\phi_1$ invariant measure is also $\phi$ invariant, the variational principle still holds (see for example \cite{SV}):
\begin{eqnarray*}
h_{top}(\phi)&=&\sup_{\mu\in\mathcal M(\phi)}h_{\mu}(\phi)\\
&=&\sup_{\mu\in\mathcal M_{erg}(\phi)}h_{\mu}(\phi),
\end{eqnarray*}
where $\mathcal M(\phi)$ is the set of invariant probability measures of $\phi$, while $\mathcal M_{erg}(\phi)$ is the set of invariant ergodic probability measures of $\phi$.

Apart from the topological and metric entropies, we will consider other geometrically interesting quantity which is given by 
the geometric growth on invariant foliations. Given an $\phi_t$-invariant foliation $W$ on $M$ with $C^1$ 
leaves, let $W_r(x)$ denote the ball of radius $r$ around $x$ in the leaf $W(x)$ that contains $x$ 
(the ambient Riemannian structure induces a Riemannian structure on $W(x)$, this provides $W(x)$ with a metric). 
The \emph{absolute geometric growth of the foliation $W$ under the flow $X$} is defined in the following way, addapting the definitions for diffeomorphisms introduced in \cite{HSX}, \cite{Saghin}:
 $$
 \chi_W(\phi) = \limsup_{t\to\infty} \frac1t \log (\sup_{x\in M}vol (\phi_t(W_r(x)))).
 $$
The volume growth is clearly independent of $r>0$. In the case of a partially hyperbolic flow $\phi_t$ we denote by $\chi_u(\phi)$ (respectively $\chi_s(\phi)$) the geometric growth along the strong unstable foliation for $\phi_t$ (respectively the stable foliation for $\phi_{-t}$).

Our first result concerns the upper semicontinuity of the topological entropy function. In this case some of the results on partially hyperbolic diffeomorphisms with 1D center can be extended to partially hyperbolic flows with 2D center. Let us denote by $\mathfrak{X}^r_2(M)$ the set of $C^r$ vector fields on $M$ which generate partially hyperbolic flows with center dimension 2.

\begin{maintheorem}\label{thm:topusc}
The following hold:
\begin{enumerate}
\item If $X\in\mathfrak{X}^1_2(M)$, then $\htop(\phi_1)=\max\{\chi_u(\phi), \chi_s(\phi)\}$.
\item $X \mapsto \htop(\phi^X)$ is upper-semicontinuous on $\mathfrak{X}^1_2(M)$, the set of $C^1$ partially hyperbolic flows with 2-dimensional center bundle.
\item If $X\in\mathfrak{X}^1_2(M)$, and all the central Lyapunov exponents of all the invariant measures of $\phi_1$ have the same sign, then $f \mapsto \htop(f)$ is upper-semicontinuous at $\phi_1$ on the set of $C^1$ diffeomorphisms.
\end{enumerate}
\end{maintheorem}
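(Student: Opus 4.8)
The overall plan is to establish (1) directly and then deduce (2) and (3) from it, following the pattern of \cite{HSX} for partially hyperbolic diffeomorphisms with one‑dimensional center; the crucial structural point is that a two‑dimensional center bundle of a flow carries at most one a priori nonzero Lyapunov exponent, since the exponent along the flow direction is forced to vanish. For the lower bound $\htop(\phi_1)\ge\max\{\chi_u(\phi),\chi_s(\phi)\}$ in (1) I would use a packing argument in the strong leaves: by definition of $\chi_u(\phi)$ there are $t_k\to\infty$ and points $x_k$ with $vol(\phi_{t_k}(W^u_r(x_k)))\ge e^{(\chi_u(\phi)-\eps)t_k}$, and since the strong unstable foliation has uniformly bounded geometry such a disk contains at least $c\,e^{(\chi_u(\phi)-\eps)t_k}$ points whose images at time $t_k$ are $\delta$‑separated in $M$; their preimages in $W^u_r(x_k)$ are $(t_k,\delta)$‑separated for the flow, so $\htop(\phi)\ge\chi_u(\phi)$, and the same applied to $\phi_{-t}$ (together with $\htop(\phi_{-1})=\htop(\phi_1)$) gives $\htop(\phi)\ge\chi_s(\phi)$. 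In particular $\chi_u(\phi),\chi_s(\phi)\ge0$.

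For the upper bound in (1), by the variational principle it suffices to bound $h_\mu(\phi_1)$ for each ergodic $\phi_1$‑invariant $\mu$. If $\mu$ is carried by the fixed‑point set of $X$, then $\phi_1$ is the identity on its support and $h_\mu(\phi_1)=0$. Otherwise $X\ne0$ $\mu$‑a.e., so $x\mapsto\langle X(x)\rangle\subset E^c_x$ is a $D\phi_1$‑invariant line field; since $\|X\|$ is bounded, the Oseledets exponent of $\mu$ along $X$ is $\le0$ for $\phi_1$ and also $\le0$ for $\phi_{-1}$ (being its negative), hence equals $0$, so the two central exponents of $\mu$ are $0$ and a single number $\mu^c\in\real$. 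If $\mu^c\le0$, Ruelle's inequality gives $h_\mu(\phi_1)\le\int\log\det(D\phi_1|_{E^u})\,d\mu\le\chi_u(\phi)$, the last inequality being the standard comparison of strong‑unstable volume growth with unstable Jacobians; if $\mu^c>0$, the central exponents of $\mu$ for $\phi_{-1}$ are $0$ and $-\mu^c<0$, so the same estimate for $\phi_{-1}$ yields $h_\mu(\phi_1)=h_\mu(\phi_{-1})\le\chi_s(\phi)$. This proves (1).

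Statement (2) reduces, via (1) and $\chi_s(\phi^X)=\chi_u(\phi^{-X})$, to the upper semicontinuity of $X\mapsto\chi_u(\phi^X)$, a maximum of upper semicontinuous functions being again upper semicontinuous. Writing $a^X_t:=\sup_x vol(\phi^X_t(W^{u,X}_r(x)))$, covering $\phi^X_t(W^{u,X}_r(x))$ by at most $C'a^X_t$ leaf‑balls of radius $r$ and then flowing by $\phi^X_s$ gives $a^X_{t+s}\le C''a^X_ta^X_s$, so $\chi_u(\phi^X)=\inf_{t>0}\tfrac1t\log(C''a^X_t)$; since for each fixed $t$ the flow $\phi^X_t$, the bundle $E^{u,X}$, and hence the leaves of $W^{u,X}$ and their volumes depend continuously on $X$ in the $C^1$ topology, each $X\mapsto a^X_t$ is continuous and the infimum is upper semicontinuous. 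For (3), after possibly replacing $X$ by $-X$ — which replaces $\phi_1$ by $\phi_1^{-1}$ and, $\htop$ being invariant under inversion, does not affect the claim — assume every central exponent of every $\phi_1$‑invariant measure is $\le0$; the argument of (1) then gives $\htop(\phi_1)=\chi_u(\phi)$. For a $C^1$ diffeomorphism $g$ near $\phi_1$ the dominated splitting $E^{cs}\oplus E^u$, the uniform expansion of $E^u$ and the uniform contraction of the sub‑bundle $E^s\subset E^{cs}$ all persist, so $g$ admits $TM=E^s_g\oplus E^c_g\oplus E^u_g$ with these properties and $\dim E^c_g=2$, depending continuously on $g$; moreover $\chi_u(g)$ (volume growth of $W^u_g$) and $\Lambda^c(g):=\lim_n\tfrac1n\log(\sup_x\|Dg^n|_{E^c_{g,x}}\|)$ are upper semicontinuous in $g$ (being infima of continuous functions), with $\Lambda^c(\phi_1)\le0$. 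Ruelle's inequality and the variational principle give $\htop(g)\le\chi_u(g)+2\max\{\Lambda^c(g),0\}$ — the positive exponents of any invariant measure lie either in $E^u_g$, contributing at most $\chi_u(g)$, or in the two‑dimensional $E^c_g$, each bounded by $\Lambda^c(g)$ — and letting $g\to\phi_1$ gives $\limsup_{g\to\phi_1}\htop(g)\le\chi_u(\phi)=\htop(\phi_1)$.

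I expect the main difficulty to be a matter of $C^1$ (rather than $C^{1+\alpha}$) regularity. The two places requiring genuine work are the comparison $\int\log\det(D\phi_1|_{E^u})\,d\mu\le\chi_u(\phi)$ of unstable Jacobians with the strong‑unstable volume growth (and its analogues for $\chi_s$ and for $g$), which rests on the bounded‑geometry and weak‑distortion properties of the strong foliations together with the subadditive ergodic theorem; and the continuous dependence of the strong stable and unstable foliations and of their leaf volumes on the $C^1$ flow, needed for the submultiplicativity‑plus‑continuity scheme in (2) and for the persistence and semicontinuity statements in (3). Once these are in place, the dichotomy on the sign of the transverse central exponent turns the rest into standard entropy inequalities.
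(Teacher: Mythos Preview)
Your proposal is correct and follows the same global architecture as the paper: use the vanishing of the flow-direction exponent to reduce to a one-sided estimate, establish $\htop=\max\{\chi_u,\chi_s\}$, then invoke upper semicontinuity of $\chi_u,\chi_s$. The differences are in the plumbing.

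For (1), the paper does not pass through Ruelle's inequality plus the comparison $\int\log\det(D\phi_1|_{E^u})\,d\mu\le\chi_u(\phi)$. Instead it quotes directly the $C^1$ inequality from \cite{Saghin},
\[
h_\nu(\phi_1)\le \chi_u(\phi_1)+\max\lambda^c\cdot\dim E^{cs},
\]
and its stable counterpart, and then observes that one of $\max\lambda^c$, $\min\lambda^c$ is zero. This bypasses precisely the step you flag as the main $C^1$ difficulty: the Jacobian--volume comparison is absorbed into the proof of the cited inequality rather than being isolated. Your route via Ruelle is conceptually cleaner but puts the regularity burden exactly where you say; the paper's route trades that for a black-box citation.

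For (2), your submultiplicativity-plus-continuity scheme (write $\chi_u=\inf_t\tfrac1t\log(C''a_t^X)$ as an infimum of continuous functions) is essentially a repackaging of the paper's Lemma~\ref{l:volusc}, which carries out the same idea via an explicit covering/separated-set count at a fixed large time $n_0$. The two arguments are equivalent.

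For (3), your approach is somewhat different and arguably more direct: you bound $\htop(g)\le\chi_u(g)+2\max\{\Lambda^c(g),0\}$ uniformly and use that $\Lambda^c(\phi_1)\le0$ (which needs the identification $\Lambda^c(f)=\sup_\mu\max\lambda^c(\mu)$ for subadditive cocycles). The paper instead argues by contradiction: take $f_n\to\phi_1$ with $\htop(f_n)>\htop(\phi_1)+\delta$, pick near-maximizing measures $\mu_n\to\mu$, and use weak-$*$ convergence together with the subadditivity of $\log\|Df^k|_{E^c}\|$ to push the bound on the top center exponent from $\mu$ back to $\mu_{n}$ for large $n$, contradicting the \cite{Saghin} inequality. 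Your version is shorter; the paper's version avoids invoking the cocycle identity but needs the extra limiting step through measures. Note also that in your bound the factor should be $\dim E^{cs}$ rather than $2$ if you go via \cite{Saghin}; with your Ruelle route the $2$ is correct but you are again relying on the Jacobian--volume comparison for the $E^u$ contribution.
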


Let us remark that part (3) of the theorem falls whithin the hypothesis of \cite{CY}, so it is not a new result. Since the proof follows easily from our other results, we will include it here in our considerations.

An interesting question is what happens if we drop the condition on Lyapunov exponents in part (3) of Theorem \ref{thm:topusc}.
\begin{question}
Suppose that $X\in\mathfrak{X}^1_2(M)$, is it true that $f \mapsto \htop(f)$ is upper-semicontinuous at $\phi_1$ on the set of $C^1$ diffeomorphisms? How about if we also assume that $\phi$ has no fixed points?
\end{question}

The main method used in the literature in order to get the upper semicontinuity of the topological entropy (or the metric entropy function) is to obtain first entropy expansiveness, then apply some standard arguments going back to Bowen \cite{Bowen}. We preffer to give a different proof of the upper semicontinuity of the topological entropy based in the study of the volume growth, proof which may be useful in other situations when the center direction of a partially hyperbolic diffeomorphism has low dimension or good control of the expansion/contraction rates. For example it is straightforward to show that Theorem \ref{thm:topusc} holds in a space of partially hyperbolic diffeomorphisms with center dimension $d$ and with the property that every invariant measure of high entropy has $d-1$ zero exponents. Our proof is based on the following lemma which may have its own interest.

\begin{lemma}\label{l:volusc}
The maps $f\mapsto \chi_u(f)$ and $f\mapsto \chi_s(f)$ are upper semicontinuous on the space of $C^1$ partially hyperbolic diffeomorphisms.
\end{lemma}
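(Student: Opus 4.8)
The plan is to reduce the statement to the behaviour of finite-time volume growth, which depends continuously on the map. Fix a small $r>0$ and, for a $C^1$ partially hyperbolic diffeomorphism $g$, write $W^u_g$ for its strong unstable foliation (the integral foliation of $E^u$, which always exists) and put
\[
V_m(g)=\sup_{x\in M}\operatorname{vol}\bigl(g^m\bigl(W^u_{g,r}(x)\bigr)\bigr),\qquad m\ge 1,
\]
where $W^u_{g,r}(x)$ is the $r$-ball around $x$ inside the leaf of $W^u_g$ through $x$; then $0<V_m(g)<\infty$. First I would establish a submultiplicativity estimate: there is $C=C(f,r)$ so that $V_{m+k}(g)\le C\,V_m(g)V_k(g)$ for all $m,k\ge1$ and all $g$ in a fixed $C^1$-neighborhood of $f$. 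This comes from a covering argument: $g^k(W^u_{g,r}(x))$ is a neighborhood of $g^kx$ inside a single unstable leaf, of volume at most $V_k(g)$; since $g^{-k}$ uniformly contracts unstable leaves, its $r$-neighborhood inside that leaf is contained in $g^k(W^u_{g,r'}(x))$ for a radius $r'$ depending only on $r$, and using that unstable leaves have uniformly bounded geometry (the volume of a leaf-ball of a given radius is bounded above and below, uniformly over $M$ and over $g$ near $f$) one covers $g^k(W^u_{g,r}(x))$ by at most $C\,V_k(g)$ leaf-balls of radius $r$; applying $g^m$ and summing gives the estimate. Since $\log\bigl(C\,V_m(g)\bigr)$ is then subadditive in $m$, Fekete's lemma shows that $\tfrac1m\log V_m(g)$ converges, so by the definition of the volume growth
\[
\chi_u(g)=\lim_{m\to\infty}\tfrac1m\log V_m(g)=\inf_{m\ge1}\tfrac1m\log\bigl(C\,V_m(g)\bigr)\le \tfrac1m\log\bigl(C\,V_m(g)\bigr)
\]
for every $m\ge1$.

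Next I would show that for each fixed $m$ the map $g\mapsto V_m(g)$ is continuous (upper semicontinuity at $f$ is all that is needed). This rests on two standard facts about partial hyperbolicity in the $C^1$ topology: the bundle $E^u_g$ and the local strong unstable leaves --- hence, for a fixed radius, the plaques $W^u_{g,r}(x)$ --- depend continuously on $g$ in the $C^1$ sense, uniformly in $x\in M$; and $g\mapsto g^m$ is continuous in the $C^1$ topology. Therefore the restrictions of $g^m$ to the plaques $W^u_{g,r}(x)$ converge in $C^1$, uniformly in $x$, to the corresponding restrictions of $f^m$, so their Jacobians, and hence the volumes $\operatorname{vol}\bigl(g^m(W^u_{g,r}(x))\bigr)$, converge uniformly in $x$; by compactness of $M$ this gives $V_m(g)\to V_m(f)$.

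Combining the two steps: for every fixed $m$,
\[
\limsup_{g\to f}\chi_u(g)\ \le\ \limsup_{g\to f}\tfrac1m\log\bigl(C\,V_m(g)\bigr)\ =\ \tfrac1m\log\bigl(C\,V_m(f)\bigr),
\]
and letting $m\to\infty$ the right-hand side tends to $\lim_m\tfrac1m\log V_m(f)=\chi_u(f)$. Hence $g\mapsto\chi_u(g)$ is upper semicontinuous. Since $\chi_s(g)=\chi_u(g^{-1})$ and $g\mapsto g^{-1}$ is continuous in the $C^1$ topology, the same conclusion holds for $\chi_s$ (alternatively, one repeats the argument verbatim with the strong stable foliation).

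The soft part of this scheme is the second step together with the reduction to it: Fekete's lemma, continuous dependence of the invariant bundles and foliations, and compactness of $M$. The point that requires care is the first step, namely making the submultiplicativity genuinely uniform --- one must check that the covering constant $C$ does not deteriorate as $g$ ranges over a neighborhood of $f$, which is where the uniform version of the unstable manifold theorem is used (uniform size of local unstable plaques, uniform contraction of $g^{-1}$ along $E^u$, uniform bounds on the geometry of unstable leaves), and that the passage from $r$- to $r'$-neighborhoods of the images $g^k(W^u_{g,r}(x))$ only costs a bounded multiplicative factor. Everything else is routine.
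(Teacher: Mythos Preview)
Your proposal is correct and follows essentially the same route as the paper: both proofs hinge on the same covering argument (uniform leaf geometry plus uniform contraction of $g^{-k}$ along $E^u$ to bound the number of $r$-balls needed to cover $g^k(W^u_r(x))$) together with the $C^1$-continuity of finite-time unstable volume. The only cosmetic difference is that you package the iteration via submultiplicativity and Fekete's lemma, while the paper fixes a single large time $n_0$ and carries out the induction on $k$ by hand; the substance is identical.
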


We remark that in the case of hyperbolic flows, one not only obtains the continuity of the topological entropy at the time one map of the flow, but this continuity also propagates to a $C^1$ neighborhood of the time one map of the flow (see \cite{SY}). However when the center of the partially hyperbolic flow is two dimensional this may not be the case, and the topological entropy may not be upper semicontinuous on a neighborhood of the time one map of the flow. We have the following result.

\begin{maintheorem}\label{thm:notnear}
There exists $X\in\mathfrak{X}^{\infty}_2(M)$ such that, for any $r\geq 1$, $C^r$ arbitrarily close to $\phi_1$ there exists a map $f_0$ such that $f \mapsto \htop(f)$ is not upper-semicontinuous at $f_0$ on the set of $C^r$ diffeomorphisms.
\end{maintheorem}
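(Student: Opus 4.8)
\emph{Plan of proof.} The idea is to build $X$ so that its time-one map $\phi_1$ lies in the $C^r$-closure of diffeomorphisms carrying homoclinic tangencies of the type for which upper semicontinuity of the topological entropy is known to fail (Newhouse, \cite{DN}, \cite{HSX}), and then to produce explicitly the tangency point $f_0$ and the sequence realizing the entropy jump. For the model I would take $M=M_\psi\times N$ with $\psi$ an Anosov flow on $M_\psi$ (say a suspension of a hyperbolic toral automorphism, or a geodesic flow) and $N$ a surface carrying a flow $\eta$, and set $\phi_t=\psi_t\times\eta_t$; then $\phi$ is $C^\infty$ and partially hyperbolic with $E^s=E^s_\psi\oplus E^s_\eta$, $E^u=E^u_\psi\oplus E^u_\eta$, and a two-dimensional center $E^c=\langle X_\psi\rangle\oplus\langle X_\eta\rangle$ containing the flow direction, and $E^c$ carries no dominated splitting. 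By Theorem~\ref{thm:topusc}(1) one has $\htop(\phi)=\max\{\chi_u(\phi),\chi_s(\phi)\}$, a quantity one controls (and keeps small, relative to the constants appearing below) through the suspended foliations of $\psi$ and $\eta$; verifying partial hyperbolicity, the $2$-dimensionality of $E^c$, and $C^\infty$-regularity is routine via openness of partial hyperbolicity and a direct computation of the splitting, and I would allow a small $C^\infty$ modification of $\phi$ near a distinguished periodic orbit $\gamma$ to position the relevant data.

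Next I would use that $E^c$ has no dominated splitting: $\phi_1$ is then in the $C^r$-closure of diffeomorphisms with homoclinic tangencies, and more precisely a $C^r$-small perturbation of $\phi_1$ supported in a thin flow box around $\gamma$ produces, inside any prescribed $C^r$-neighbourhood of $\phi_1$, a diffeomorphism $f_0$ for which: the time-one center dynamics has been deformed to have a hyperbolic periodic point $p$ with one weakly expanding and one weakly contracting central eigenvalue (so $\dim W^u(p)=\dim E^u+1$, $\dim W^s(p)=\dim E^s+1$); and $W^u(p)$ and $W^s(p)$ have a quadratic homoclinic tangency along the central directions at a point $q$ reached in bounded time. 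Since partial hyperbolicity is $C^1$-open, $f_0$ is still partially hyperbolic with $2$-dimensional center, and one checks that $\htop(f_0)$ is within any prescribed $\epsilon$ of $\htop(\phi_1)$: the perturbation is localized, a single tangency orbit creates no entropy, and the estimate is made quantitative using the upper semicontinuity of $\chi_u,\chi_s$ from Lemma~\ref{l:volusc} together with the volume-growth estimates for the perturbed foliations.

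It then remains to invoke the standard consequence of such a tangency: unfolding it produces, $C^r$-arbitrarily close to $f_0$, diffeomorphisms $f_n$ carrying hyperbolic basic sets $\Lambda_n$ near the orbit of $q$ on which $\htop(f_n|_{\Lambda_n})\ge \htop(f_0)+c$ for a fixed $c>0$ — the mechanism being that the unfolding can be arranged to increase the number of "folds'', hence of symbols, of the horseshoe attached to $p$ while its return time stays bounded, so that the entropy gained is uniform in $n$ (cf. \cite{DN}, \cite{HSX}). Taking $\epsilon<c$ in the previous step gives $\limsup_n\htop(f_n)\ge\htop(f_0)+c>\htop(f_0)$, i.e. $f\mapsto\htop(f)$ is not upper semicontinuous at $f_0$. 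Since $r\ge1$ and the $C^r$-neighbourhood of $\phi_1$ were arbitrary, this yields Theorem~\ref{thm:notnear}.

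The hard part will be the coupling of the three steps: one must construct $\phi$ so that the absence of domination in $E^c$ (which drives the creation of the tangency in step two) coexists with the smallness of $\max\{\chi_u,\chi_s\}$ and with a configuration of $p$ in which unfolding the tangency genuinely enlarges the horseshoe by a definite amount, rather than the new entropy being absorbed or the background entropy being already raised at $f_0$ itself by the transverse homoclinic intersections that accompany a hyperbolic $p$. The domination inequalities relating $E^c$ to $E^u$ and $E^s$ constrain the eigenvalues available at $p$ quite tightly, and reconciling these constraints with a gap $c$ that does not shrink with the size of the perturbation is the technical core of the argument; the remaining ingredients — iterated graph transforms at the tangency, volume growth of the perturbed unstable and stable foliations, and the localization estimate bounding $\htop(f_0)-\htop(\phi_1)$ — are lengthy but standard.
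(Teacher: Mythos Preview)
Your overall mechanism --- produce a map near $\phi_1$ carrying a homoclinic tangency and invoke the Downarowicz--Newhouse entropy jump --- is the one the paper uses, but your specific construction is more complicated and has real difficulties, several of which you yourself flag in the final paragraph. Let me point out where the paper's construction differs and why it dissolves those difficulties.

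First, the model. Your product $\psi_t\times\eta_t$ with $\eta$ a nontrivial surface flow carrying directions $E^s_\eta$, $E^u_\eta$ is problematic: a flow on a closed surface has no nontrivial uniformly hyperbolic transverse splitting, and if $\eta$ has fixed points then $\langle X_\eta\rangle$ degenerates there. The paper instead takes $\alpha_t$ the suspension of a hyperbolic toral automorphism $A$ on $N$, sets $M=N\times\mathbb T^1$, and lets $\phi_t=\alpha_t\times\mathrm{id}$ (trivial flow on the circle factor). Then $E^s$, $E^u$ come entirely from $A$, and $E^c$ is spanned by the flow direction and the circle fiber --- two-dimensional, with no rate-matching required.

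Second, and this is the key point, the paper does not build the tangency inside the full partially hyperbolic system. Near the invariant torus $\mathbb T^2\times\{0\}\times\mathbb T^1$, the map $\phi_1$ is literally the product $A\times\mathrm{id}$ on $K=\mathbb T^2\times[-\delta,\delta]\times\mathbb T^1$. The perturbation replaces only the two-dimensional identity factor by a map $g_0$ which is the time-one map of a small Hamiltonian on $[-\delta,\delta]\times\mathbb T^1$ having a saddle with a homoclinic \emph{loop} (not merely a quadratic tangency). Because $g_0$ is integrable, $\htop(g_0)=0$ exactly, and the product entropy formula gives $\htop(f_0)=\htop(A)+0=\htop(\phi_1)$ on the nose. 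This completely bypasses your worry about transverse intersections at $p$ already raising the background entropy of $f_0$, and there is no need for any estimate via Lemma~\ref{l:volusc}.

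Third, the entropy jump is read off from the two-dimensional factor alone: the snake-like perturbations of \cite{DN} give $g_n\to g_0$ in $C^r$ with $\htop(g_n)>\lambda>0$ uniformly, and $f_n=A\times g_n$ on $K$ (equal to $\phi_1$ outside) satisfies $\htop(f_n)\ge\htop(A)+\lambda=\htop(f_0)+\lambda$. There is no eigenvalue constraint to reconcile: the saddle of $g_0$ lives entirely in the center directions, and its eigenvalues are decoupled from the domination constants of $\phi$.

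In short, the decoupling into a genuine product $A\times g$ on $K$ is the idea missing from your sketch; it turns each step you describe as ``lengthy but standard'' or ``the technical core'' into a one-line application of the product entropy formula.
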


In the case of the lower-semicontinuity of the topological entropy, the parallel between partially hyperbolic maps with 1D center and partially hyperbolic flows with 2D center fails. The map $X \mapsto \htop(\phi^X)$ is not lower-semicontinuous on the set of $C^{\infty}$ vector fields generating partially hyperbolic flows with 2-dimensional center bundle.

\begin{maintheorem}\label{thm:notoplsc}
There exists $X_0\in\mathfrak{X}^{\infty}_2(M)$, such that the map $X \mapsto \htop(\phi^X)$ is not lower-semicontinuous on $\mathfrak{X}^{\infty}_2(M)$ at $X_0$.
\end{maintheorem}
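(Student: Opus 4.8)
The strategy is to reduce the statement, by part~(1) of Theorem~\ref{thm:topusc}, to the failure of lower semicontinuity of the unstable volume growth (or, reversing time, of the stable volume growth). Since for $X\in\mathfrak X^1_2(M)$ one has $\htop(\phi^X)=\max\{\chi_u(\phi^X),\chi_s(\phi^X)\}$, it suffices to produce $X_0\in\mathfrak X^{\infty}_2(M)$, a sequence $X_n\to X_0$ in $\mathfrak X^{\infty}_2(M)$, and numbers $h_1<h_0$ with $\chi_u(\phi^{X_0})\ge h_0$ and $\chi_u(\phi^{X_n}),\chi_s(\phi^{X_n})\le h_1$ for all $n$; the bound on $\chi_s$ is obtained once and for all by taking the strong stable bundle uniformly and strongly contracting. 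Indeed, then $\htop(\phi^{X_n})\le h_1<h_0\le\htop(\phi^{X_0})$, so $X\mapsto\htop(\phi^X)$ fails to be lower-semicontinuous at $X_0$ (consistently with the upper semicontinuity of part~(2)).

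The construction of $X_0$ is the heart of the matter, and the natural source of the excess growth $h_0$ is a singularity. I would build a partially hyperbolic flow with two-dimensional center that has a hyperbolic basic set $\Lambda$ of unstable volume growth $h_1>0$, together with a singularity $\sigma$ whose derivative has a one-dimensional strong stable eigenvalue, a two-dimensional center block, and a one-dimensional strong unstable eigenvalue $\lambda$, with the gaps chosen so that $E^s_\sigma\oplus E^c_\sigma\oplus E^u_\sigma$ is dominated as in \eqref{eq:domination} and $\lambda$ is large relative to the contraction rate at $\sigma$. One then links $\sigma$ and $\Lambda$ by a heteroclinic configuration in which $W^u(\Lambda)$ meets $W^s(\sigma)$ transversally (a robust intersection) while $W^u(\sigma)$ meets $W^s(\Lambda)$ (a non-robust intersection, since $W^s(\Lambda)$ is only a codimension-one lamination), arranged so that at $X_0$ there is a recurrent set on which a strong unstable plaque is, on each loop, dragged close to $\sigma$ and stretched along $W^u(\sigma)$ by a factor whose exponential rate (per unit of elapsed time) exceeds $h_1$. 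The plaques accumulating on this set then grow at a rate $h_0>h_1$, giving $\chi_u(\phi^{X_0})\ge h_0$.

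Finally one takes a $C^{\infty}$ family $X_n\to X_0$ that destroys the connection $W^u(\sigma)\cap W^s(\Lambda)$ --- pushing $W^u(\sigma)$ into a gap of the lamination --- in such a way that the whole recurrence through $\sigma$ is severed: for $X_n$ the forward orbit of $W^u(\sigma)$ leaves the relevant region for good, no orbit passes near $\sigma$ more than once, and the strong stretching near $\sigma$ contributes only a transient amplification, bounded in $t$; the asymptotic growth rate of every strong unstable plaque is then governed by the surviving hyperbolic pieces, so $\chi_u(\phi^{X_n})\le h_1$. The main difficulty is twofold: realizing all of this with a single $C^{\infty}$ vector field that stays partially hyperbolic with two-dimensional center throughout, and --- the genuinely delicate point --- checking that the perturbation makes $\chi_u$ \emph{jump} rather than decay continuously, i.e.\ that once the recurrence through $\sigma$ is opened the excess expansion survives only in longer and longer transients which do not affect $\limsup_{t\to\infty}\tfrac1t\log\sup_{x}\operatorname{vol}(\phi_t(W^u_r(x)))$. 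This ``no gradual recovery'' step --- which fails in the superficially similar Shilnikov scenario, where breaking a homoclinic loop to a saddle-focus leaves residual horseshoes that restore the full growth in the limit --- is where the eigenvalue choices at $\sigma$ and the precise way the connection is opened must be used.
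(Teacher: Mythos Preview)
Your proposal is a strategy sketch, not a proof: you explicitly flag the two hard steps (realizing a globally partially hyperbolic $C^\infty$ vector field with the required heteroclinic cycle, and the ``no gradual recovery'' verification) and do not carry either one out. Those gaps are genuine. In particular, controlling $\chi_u$ after opening a connection near a singularity is subtle: the excess stretching you want at $X_0$ comes from arbitrarily long passages near $\sigma$, but for $X_n$ orbits may still make a single pass near $\sigma$ taking arbitrarily long time (the closer you start to $W^s(\sigma)$, the longer you linger), and you would have to show that these single passages do not by themselves push $\limsup_{t\to\infty}\tfrac1t\log\sup_x\operatorname{vol}(\phi_t(W^u_r(x)))$ back up to $h_0$. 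You also need the global partial hyperbolicity to survive the surgery that creates the heteroclinic configuration; this is not automatic.

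The paper bypasses all of this with a short, explicit, singularity-free construction. Take any $C^\infty$ hyperbolic flow $\alpha$ on $N$, set $M=N\times\mathbb T^1$, pick a nonconstant $C^\infty$ function $a:\mathbb T^1\to(0,\infty)$, and let $X_0=Y^0$ where $Y^r(x,s)=(a(s)X(x),r)$. The flow of $Y^0$ preserves each slice $N\times\{s\}$, on which it is $\alpha$ sped up by $a(s)$, so $\htop(\phi^{Y^0})=\big(\max_s a(s)\big)\htop(\alpha)$. For $r\neq0$ one computes directly that $\phi^{Y^r}_{1/r}(x,s)=\big(\alpha_{\frac1r\int_{\mathbb T^1}a}\,(x),s\big)$, hence $\htop(\phi^{Y^r})=\big(\int_{\mathbb T^1}a\big)\htop(\alpha)$, which is strictly smaller since $a$ is nonconstant. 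Letting $r\to0$ gives the failure of lower semicontinuity. No singularities, no heteroclinic bifurcations, no asymptotic volume-growth estimates are needed: the entropy is computed exactly on both sides by elementary reparametrization and averaging. The mechanism is not a jump in $\chi_u$ caused by breaking recurrence through a singular point, but simply the replacement of a pointwise maximum of a speed function by its average once a transverse drift mixes the $\mathbb T^1$ fibers.
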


This example will also show that the maps $X\mapsto\chi_u(\phi^X)$ and $X\mapsto\chi_s(\phi^X)$ are not lower semicontinuous on $\mathfrak{X}^{\infty}_2(M)$ at $X_0$.

There are situations where we can guarantee the lower semicontinuity at some vector field $X_0$. This happens if we have a bit of hyperbolicity in the center bundle.

\begin{proposition}\label{prop:LSC}
Suppose that $X\in\mathfrak{X}^2_2(M)$, and suppose that the following holds:
\begin{equation}\label{eq:lsc}
\htop(\phi^{X_0})>\min\{\chi_u(\phi^{X_0}), \chi_s(\phi^{X_0})\}.
\end{equation}
Then the map $X \mapsto \htop(\phi^X)$ is lower-semicontinuous at $X_0$ on $\mathfrak{X}^1_2(M)$.
\end{proposition}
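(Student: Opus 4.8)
The idea is to reduce the lower bound on entropy to the persistence of a hyperbolic horseshoe. First I would invoke Theorem~\ref{thm:topusc}(1): since $\htop(\phi^{X_0})=\max\{\chi_u(\phi^{X_0}),\chi_s(\phi^{X_0})\}$, hypothesis \eqref{eq:lsc} forces $\chi_u(\phi^{X_0})\ne\chi_s(\phi^{X_0})$, and, after possibly reversing the flow (which exchanges the roles of $\chi_u$ and $\chi_s$, preserves $\htop$, preserves partial hyperbolicity and preserves the lower semicontinuity statement), I may assume
\[
\htop(\phi^{X_0})=\chi_u(\phi^{X_0})>\chi_s(\phi^{X_0});
\]
note also that \eqref{eq:lsc} already gives $\htop(\phi^{X_0})>0$. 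Fix $\eps\in\big(0,\htop(\phi^{X_0})-\chi_s(\phi^{X_0})\big)$. By the variational principle there is an ergodic $\phi^{X_0}$-invariant measure $\mu$ with
\[
h_\mu(\phi^{X_0})>\htop(\phi^{X_0})-\tfrac{\eps}{2}>\chi_s(\phi^{X_0}).
\]

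The key step is to show that $\mu$ is a hyperbolic measure of the flow. Since $h_\mu(\phi^{X_0})>0$, the ergodic measure $\mu$ gives zero mass to the set of singularities of $X_0$, so for $\mu$-a.e.\ $x$ the flow direction contributes a genuine zero Lyapunov exponent inside the two-dimensional space $E^c_x$; denote by $\lambda^c(\mu)$ the remaining (non-flow) central exponent. By partial hyperbolicity the exponents along $E^s$ and $E^u$ are uniformly bounded away from $0$, so the only exponent of $\phi_1$ that can vanish besides the flow direction is $\lambda^c(\mu)$. Suppose $\lambda^c(\mu)\ge 0$. Then the negative exponents of $\phi_1$ are precisely those along $E^s$, and Ruelle's inequality applied to $\phi_{-1}$ (valid in class $C^1$) yields
\[
h_\mu(\phi^{X_0})=h_\mu(\phi_{-1})\le\sum_i|\lambda^s_i(\mu)|\le\chi_s(\phi^{X_0}),
\]
where the last inequality is the standard estimate of the integrated stable Jacobian by the volume growth $\chi_s$ of the stable foliation (bounded distortion along stable leaves, available because $X_0\in\mathfrak{X}^2_2(M)$; this inequality is also contained in the proof of Theorem~\ref{thm:topusc}(1)). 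This contradicts the choice of $\mu$. Hence $\lambda^c(\mu)<0$, every Lyapunov exponent of $\phi_1$ except the flow direction is nonzero, and $\mu$ is an ergodic hyperbolic measure of the $C^{1+\alpha}$ flow $\phi^{X_0}$ with $h_\mu(\phi^{X_0})>0$.

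Next I would apply Katok's horseshoe theorem, in its version for $C^{1+\alpha}$ flows: there is a basic (locally maximal, transitive, uniformly hyperbolic) set $\Lambda$ of $\phi^{X_0}$ with $\htop(\phi^{X_0}\!\mid_\Lambda)>h_\mu(\phi^{X_0})-\tfrac{\eps}{4}>\htop(\phi^{X_0})-\eps$. Basic sets of flows persist under $C^1$ perturbations of the vector field, and the topological entropy of the continuation depends continuously on the vector field: there is a $C^1$-neighbourhood $\cU\subset\mathfrak{X}^1_2(M)$ of $X_0$ (the inclusion being possible because $\mathfrak{X}^1_2(M)$ is $C^1$-open) such that every $X\in\cU$ has a basic set $\Lambda_X$ of $\phi^X$ with $\htop(\phi^X\!\mid_{\Lambda_X})>\htop(\phi^{X_0}\!\mid_\Lambda)-\tfrac{\eps}{4}$. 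Hence $\htop(\phi^X)\ge\htop(\phi^X\!\mid_{\Lambda_X})>\htop(\phi^{X_0})-\eps$ for all $X\in\cU$. Since $\eps>0$ was arbitrary, $\liminf_{X\to X_0}\htop(\phi^X)\ge\htop(\phi^{X_0})$, which is the asserted lower semicontinuity at $X_0$.

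The main obstacle is the middle step: forcing the near-maximizing measure $\mu$ to be hyperbolic. This is exactly where hypothesis \eqref{eq:lsc} is used — without a gap between $\chi_u$ and $\chi_s$ the measures of almost maximal entropy may have a vanishing central exponent, no horseshoe of comparable entropy need exist, and indeed lower semicontinuity can fail, as in Theorem~\ref{thm:notoplsc}. A secondary point requiring care is to use the flow version of Katok's approximation theorem (valid in the $C^{1+\alpha}$ category, which is why the hypothesis is $X_0\in\mathfrak{X}^2_2(M)$ rather than merely $\mathfrak{X}^1_2(M)$), together with the observation that positivity of entropy excludes measures concentrated near singular points, so that the Oseledets data along the two-dimensional center really reduces to the single exponent $\lambda^c(\mu)$.
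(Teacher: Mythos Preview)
Your proof is correct and follows the same strategy as the paper: pick a near-maximizing ergodic measure, use the gap hypothesis \eqref{eq:lsc} to force a nonzero center exponent (hence hyperbolicity of the measure), apply the flow version of Katok's horseshoe theorem (the paper invokes Lian--Young, Theorem~\ref{LianYoung}), and finish with the $C^1$-persistence and entropy-continuity of the horseshoe. The only difference is cosmetic: the paper takes the symmetric normalization $\htop(\phi^{X_0})>\chi_u(\phi^{X_0})$ and reads off a \emph{positive} center exponent directly from inequality \eqref{eq:Su}, rather than passing through Ruelle's inequality together with a Jacobian/volume-growth comparison --- so in the paper the $C^2$ hypothesis is used only for the horseshoe construction, not for the hyperbolicity step.
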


\begin{remark}
A similar result holds for partially hyperbolic flows with 3 dimensional center bundle. Suppose that $X_0\in\mathfrak{X}^2_3(M)$, the $C^2$ vector fields generating flows which are partially hyperbolic with 3 dimensional center bundle. If $\htop(\phi^{X_0})>\max\{\chi_u(\phi^{X_0}), \chi_s(\phi^{X_0})\}$, then the map $X \mapsto \htop(\phi^X)$ is lower-semicontinuous at $X_0$ on $\mathfrak{X}^1_3(M)$.
\end{remark}

\begin{remark}
The condition \eqref{eq:lsc} can be replaced by the following condition: there exists a measure of maximal entropy of $\phi^{X_0}$ (or a sequence of invariant measures with the metric entropies converging to $\htop(\phi^{X_0})$) with one nonzero center exponent.
\end{remark}

Regarding the upper-semicontinuity of the metric entropy function, or more generally the entropy expansiveness, the sitation of the partially hyperbolic flows with 2D center is quite similar to the case of partially hyperbolic diffeomorphisms with 1D center. For some $\vep>0$, $x\in M$ denote de infinite dynamical ball  at $x$ by 
\[B_{\infty}(x,\vep)=\bigcap_{\Z\ni n}\phi_{-n}B_{\vep}(\phi_n(x))= \{y\in M\,\,|\,\, d(\phi_n(x),\phi_n(y))\leq\vep\text{ for all } n\in\Z\}\]
We say that $\phi_1$ is {\it entropy expansive} if there exists $\vep>0$ such that $\htop(\phi_1|{B_{\infty}(x,\vep)})=0$ for all $x\in M$.

\begin{maintheorem}\label{thm:metrusc}
Assume that $X\in\mathfrak{X}^1_2(M)$, and $\phi$ does not have fixed points. Then $\phi_1$ is entropy expansive, and consequently the map $\mu\mapsto h_{\mu}(\phi)$ is upper-semicontinuous on the space $\mathcal M(\phi)$ of invariant Borel probability measures for $\phi$.
\end{maintheorem}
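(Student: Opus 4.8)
The plan is to prove that the time‑one map $\phi_1$ is entropy expansive; the upper semicontinuity of $\mu\mapsto h_\mu(\phi)$ then follows from Bowen's classical argument \cite{Bowen}. The strategy is the one used for partially hyperbolic diffeomorphisms with one‑dimensional center (\cite{DFPV,LVY}), together with the extra observation that, since $\phi$ has no fixed points, the line field spanned by $X$ inside $E^c$ is dynamically neutral and can be factored out, so that for this argument the center behaves as if it were one‑dimensional. I would begin by fixing locally invariant plaque families for $E^c$, $E^{cs}=E^s\oplus E^c$ and $E^{cu}=E^c\oplus E^u$ (these exist and can be chosen coherently by the standard theory; see \cite{DFPV,LVY}). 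Using the uniform contraction of $E^s$ and expansion of $E^u$, one shows in the usual way that there are $\vep_0>0$ and $\rho>0$ such that, for every $\vep\le\vep_0$ and every $x\in M$, $B_\infty(x,\vep)\subset W^{cs}_\rho(x)\cap W^{cu}_\rho(x)\subset W^c_\rho(x)$, where $W^c_\rho(x)$ is the local center plaque through $x$ — a two‑dimensional $C^1$ disc which, shrinking $\rho$, is nearly flat (a graph over $E^c_x$ of small slope). Thus it suffices to bound $\htop(\phi_1\mid B_\infty(x,\vep))$ when $B_\infty(x,\vep)$ sits in such a disc.

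Next I would use that $\phi$ has no fixed points: then $X$ vanishes nowhere, so $0<c_1\le\|X\|\le c_2$ on $M$, and each $W^c_\rho(x)$ is foliated by orbit segments of $\phi$. Choosing, continuously in $x$ (possible since $X\neq 0$ and $E^c$ varies continuously), a one‑dimensional $C^1$ cross‑section $\Sigma_x\subset W^c_\rho(x)$ transverse to $X$, every point of $W^c_\rho(x)$ is uniquely $\phi_s(z)$ with $z\in\Sigma_x$ and $|s|$ small. In these coordinates $\phi_1$, followed by the projection along orbit segments back onto the model plaque through $\phi_1(x)$, reads $\phi_s(z)\mapsto\phi_{s'}(h_x(z))$, where $h_x\colon\Sigma_x\to\Sigma_{\phi_1(x)}$ is the flow holonomy — a $C^1$ diffeomorphism of one‑dimensional manifolds, hence monotone, with derivative bounded above and below uniformly in $x$ — and $s'-(s+1)$ is uniformly bounded. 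The point is that the flow coordinate is neutral: if $y,y'$ lie on a common orbit then the flow time between them is preserved by every $\phi_n$, and $d(\phi_n y,\phi_n y')$ stays comparable to it within a uniform factor.

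Now project $B_\infty(x,\vep)$ onto $\Sigma_x$ along orbit segments and call the image $\bar B(x)$. Since $\phi_j(B_\infty(x,\vep))=B_\infty(\phi_j x,\vep)\subset W^c_\rho(\phi_j x)$ for all $j$, local invariance of the plaques forces the transverse iterates to lie in arcs of $\Sigma_{\phi_j x}$ of length at most a constant $L$ independent of $x$ and of $n$. As the induced transverse maps are monotone, the counting argument of \cite{DFPV} applies: order an $(n,\delta)$‑separated subset $E$ of $\bar B(x)$ along the arc and observe that, for each $j\in\{0,\dots,n-1\}$, only $O(L/\delta)$ consecutive pairs of $E$ can be $\delta$‑separated at time $j$, since their time‑$j$ images are pairwise disjoint sub‑arcs of total length $\le L$; hence $|E|=O(n/\delta)$, so the transverse dynamics on $\bar B(x)$ has zero entropy. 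Combining with the neutrality of the flow coordinate — along each orbit only $O(1/\delta)$ many $\delta$‑separated flow times are available, and that number never grows — one checks that an $(n,\delta)$‑separated subset of $B_\infty(x,\vep)$ has cardinality $O(n/\delta^2)$, polynomial in $n$. Therefore $\htop(\phi_1\mid B_\infty(x,\vep))=0$ for all $x$ and all $\vep\le\vep_0$, so $\phi_1$ is entropy expansive, and \cite{Bowen} gives that $\mu\mapsto h_\mu(\phi_1)$ is upper semicontinuous on $\mathcal M(\phi_1)$; restricting to $\mathcal M(\phi)\subset\mathcal M(\phi_1)$ and using $h_\mu(\phi)=h_\mu(\phi_1)$ for $\phi$‑invariant $\mu$ completes the proof.

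I expect the main obstacle to be the third paragraph, namely making the transverse reduction rigorous despite the plaque families being only locally invariant: the induced transverse dynamics along a pseudo‑orbit is a concatenation of flow holonomies between a priori different cross‑sections, and one must verify, uniformly in $x$ and $n$, that passing to flow‑box coordinates and back does not manufacture combinatorial complexity — that is, that the length bound $L$ on the transverse iterates and the monotonicity of each step genuinely hold, while correctly bookkeeping the extra neutral flow coordinate in the separated‑set estimate. The reduction to a center plaque in the first two paragraphs and the passage from entropy expansiveness to upper semicontinuity of the metric entropy are routine.
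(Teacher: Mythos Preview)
Your approach is essentially the paper's: reduce the Bowen ball to a fake center plaque, introduce flow-box coordinates $(a,b)$ with $b$ on a transversal $\Sigma_x$ and $a$ the flow time, observe that in these coordinates $\phi_1$ becomes $(a,b)\mapsto(a+\alpha_x(b),\beta_x(b))$ with $\beta_x$ monotone, and then use the monotonicity to get a polynomial bound on separated/spanning sets. The paper uses spanning sets where you use separated sets, but the structure is the same.

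The one point where your sketch is too quick is precisely the one you flag. The flow coordinate is \emph{not} neutral across different orbits: the shift $\alpha_x(b)$ depends on $b$, so two points with nearby transverse coordinates can become $\delta$-separated purely in the $a$-direction after many iterates, via accumulated shear $\sum_{j<i}\bigl(\alpha_{\phi_j(x)}(\beta^j(b))-\alpha_{\phi_j(x)}(\beta^j(b'))\bigr)$. Consequently the product estimate ``$(n,\delta)$-separated transversely'' $\times$ ``$\delta$-separated along the orbit'' $=O(n/\delta)\cdot O(1/\delta)$ is not justified: distinct transverse projections of an $(n,\delta)$-separated set need not themselves be $(n,c\delta)$-separated for the $\beta$-dynamics. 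The paper handles this by working with spanning sets and using that each $\alpha_x$ is uniformly Lipschitz (constant $L$): it takes an $(n,\delta)$-spanning set for the transverse dynamics with $\delta=\vep'/\bigl(2(1+nL)\bigr)$, so that the accumulated shear over $n$ steps stays below $\vep'/2$, and obtains a spanning set of cardinality $O(n^2/\vep'^2)$ rather than $O(n/\vep'^2)$. Since any polynomial bound gives zero local entropy, this is harmless for the conclusion, and your outline becomes a complete proof once this Lipschitz-shear bookkeeping is inserted.
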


As a consequence we obtain the existence of equilibrium states and measures of maximal entropy:

\begin{corollary}\label{c:estates}
Assume that $X\in\mathfrak{X}^1_2(M)$, and $\phi$ does not have fixed points. Then every continuous potential on $M$ has equilibrium states. In particular there exist measures of maximal entropy.
\end{corollary}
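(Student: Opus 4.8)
The plan is to deduce the corollary from Theorem \ref{thm:metrusc} by the classical argument of Bowen relating (entropy) expansiveness and upper semicontinuity of the entropy to the existence of equilibrium states. Fix a continuous potential $\varphi\colon M\to\mathbb R$, and recall that the topological pressure of $\phi$ with respect to $\varphi$ obeys the variational principle
\[
P_{\topp}(\phi,\varphi)=\sup_{\mu\in\mathcal M(\phi)}\left(h_\mu(\phi)+\int_M\varphi\,d\mu\right)
\]
(see e.g. \cite{SV}); by definition an equilibrium state for $\varphi$ is an invariant measure attaining this supremum, and for $\varphi\equiv 0$ it is exactly a measure of maximal entropy. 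So it suffices to show the supremum above is attained.

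First I would recall that $\mathcal M(\phi)$, the set of $\phi$-invariant Borel probability measures, is nonempty, convex, and compact in the weak$^*$ topology since $M$ is compact. As $\varphi$ is continuous, the map $\mu\mapsto\int_M\varphi\,d\mu$ is weak$^*$ continuous on $\mathcal M(\phi)$. By Theorem \ref{thm:metrusc} (this is where the hypotheses $X\in\mathfrak{X}^1_2(M)$ and absence of fixed points are used, via entropy expansiveness of $\phi_1$), the map $\mu\mapsto h_\mu(\phi)$ is upper semicontinuous on $\mathcal M(\phi)$.

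Consequently the functional $\mu\mapsto h_\mu(\phi)+\int_M\varphi\,d\mu$ is a sum of an upper semicontinuous function and a continuous function, hence upper semicontinuous on the compact space $\mathcal M(\phi)$, and therefore attains its maximum at some $\mu_0\in\mathcal M(\phi)$. By the variational principle this maximum value is $P_{\topp}(\phi,\varphi)$, so $\mu_0$ is an equilibrium state for $\varphi$. Taking $\varphi\equiv 0$ produces a measure of maximal entropy.

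There is no substantial obstacle beyond Theorem \ref{thm:metrusc} itself; the only points to keep track of are bookkeeping ones already addressed in the excerpt: that the metric entropy of the flow agrees with that of its time-one map, so that Bowen's upper semicontinuity for entropy-expansive maps applies to $h_\mu(\phi)=h_\mu(\phi_1)$, and that, although not every $\phi_1$-invariant measure is $\phi$-invariant, the variational principle for the pressure (as for the entropy) still holds over $\mathcal M(\phi)$. These remarks, together with the weak$^*$ compactness of $\mathcal M(\phi)$, close the argument.
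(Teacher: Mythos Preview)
Your argument is correct and is precisely the standard deduction the paper has in mind: the authors do not spell out a proof of Corollary~\ref{c:estates}, merely noting (at the start of the proof of Theorem~\ref{thm:metrusc}) that it is a ``standard consequence'' of entropy expansiveness and the resulting upper semicontinuity of $\mu\mapsto h_\mu(\phi)$. Your write-up supplies exactly that standard argument --- upper semicontinuity plus weak$^*$ compactness of $\mathcal M(\phi)$ and the variational principle --- so there is nothing to add or correct.
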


It is also  well known that a diffeomorphism which is entropy expansive (or more generally asymptotically entropy expansive) admits a principal symbolic extension (see \cite{BFF}). Since we proved above that $\phi_1$ is entropy expansive, we also obtain the following corollary:

\begin{corollary}\label{c:pse}
Assume that $X\in\mathfrak{X}^1_2(M)$, and $\phi$ does not have fixed points. Then $\phi_1$ admits a principal symbolic extension.
\end{corollary}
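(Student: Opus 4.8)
The plan is to deduce this directly from Theorem \ref{thm:metrusc} together with the general symbolic extension theory of Boyle, Fiebig and Fiebig. The first step is to observe that entropy expansiveness is stronger than asymptotic entropy expansiveness: if there is a single $\vep>0$ with $\htop(\phi_1|B_{\infty}(x,\vep))=0$ for every $x\in M$, then the tail entropy $h^{*}(\phi_1)=\lim_{\vep\to 0}\sup_{x\in M}\htop(\phi_1|B_{\infty}(x,\vep))$ vanishes, which is precisely the statement that $\phi_1$ is asymptotically entropy expansive in the sense of Misiurewicz. By Theorem \ref{thm:metrusc}, the hypotheses $X\in\mathfrak{X}^1_2(M)$ and $\phi$ without fixed points make $\phi_1$ entropy expansive, hence asymptotically entropy expansive.

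The second step is to invoke the theorem of \cite{BFF}: every asymptotically entropy expansive homeomorphism of a compact metric space admits a principal symbolic extension, that is, a subshift $(Y,S)$ over a finite alphabet together with a continuous surjection $\pi\colon Y\to M$ satisfying $\pi\circ S=\phi_1\circ\pi$ and $h_{\nu}(S)=h_{\pi_{*}\nu}(\phi_1)$ for every $S$-invariant probability measure $\nu$. Since $M$ is a compact Riemannian manifold and $\phi_1$ is a diffeomorphism, in particular a homeomorphism of a compact metric space, this applies verbatim to $f=\phi_1$ and yields the desired conclusion.

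There is essentially no obstacle to overcome at this stage, since all the dynamical content has already been absorbed into Theorem \ref{thm:metrusc}; the only point worth a brief remark is that the notion of entropy expansiveness we use — formulated with the infinite dynamical balls $B_{\infty}(x,\vep)$ for the single map $\phi_1$ over integer times — is exactly the one required in \cite{BFF}, so no translation between definitions is needed. (If one wished to obtain a symbolic extension of the flow itself rather than of $\phi_1$, one could pass to a suspension of such a subshift, but this is not needed for the statement.)
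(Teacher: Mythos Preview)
Your proof is correct and is essentially the same as the paper's: the paper also deduces the corollary immediately from Theorem~\ref{thm:metrusc} by noting that (asymptotic) entropy expansiveness implies the existence of a principal symbolic extension via \cite{BFF}. There is nothing further to add.
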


In order to obtain the entropy expansiveness, we have to make the technical assumption on the lack of the singularities of the vector field. An interesting question is weather this condition can be removed.

\begin{question}
Assume that $X\in\mathfrak{X}^1_2(M)$, possible with singularities, is it true that $\phi_1$ is entropy expansive? How about assymptotic entropy expansive? How about if we assume that $\phi_1$ is dynamically coherent?
\end{question}

Another interesting question is weather a similar result can be obtained for dynamically coherent partially hyperbolic diffeomorphisms with 2D center, assuming that the center foliation contains a 1D sub-foliation. Our proof uses some kind of strong uniformity of the dynamics along this 1D sub-foliation. Maybe an easier question is the following.

\begin{question}
Let $f$ be a partially hyperbolic diffeomorphism with 2D center, which is also a skew product with circle fibers over a dynamically coherent partially hyperbolic diffeomorphism with 1D center. Is it true that $f$ is (assymptotic) entropy expansive?
\end{question}

In Section \ref{s:tusc} we will consider the upper semicontinuity of the topological entropy and of the volume growth, and we will prove Lemma \ref{l:volusc}, Theorem \ref{thm:topusc} and Theorem \ref{thm:notnear}. In Section \ref{s:lsc} we will show the lack of lower semicontinuity (Theorem \ref{thm:notoplsc}) and we will prove Proposition \ref{prop:LSC}, while in the last section we will treat the entropy expansiveness and the upper semicontinuity of the metric entropy function (Theorem \ref{thm:metrusc}).


\section{Upper semicontinuity of the topological entropy}\label{s:tusc}

We start this section by proving that the geometric growth along the foliations can be computed using any time-$t$ map for the flow, and the geometric growth of a foliation under a flow coincides with the geometric growth of the foliation under the time one map of the flow.

\begin{proposition}\label{prop:A}
Let $X\in\mathfrak{X}^1(M)$ be a $C^1$-vector field generating a partially hyperbolic flow $(\phi_t)$ wich leaves invariant a foliation with $C^1$ leaves $W$. Then,
$$
\chi_W(\phi)=\chi_W(\phi_1)=\frac{1}{t}\chi_W(\phi_t)\hbox{ for every }t>0.
$$
\end{proposition}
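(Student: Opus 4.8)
The plan is to reduce the statement to an elementary fact about $\limsup$'s of a function with bounded multiplicative oscillation on bounded time windows. Put $d=\dim W$ and, for $s\geq 0$,
$$
g(s):=\sup_{x\in M}\operatorname{vol}\bigl(\phi_s(W_r(x))\bigr),
$$
the supremum of the $d$-dimensional Riemannian volumes of the image submanifolds (with $r>0$ fixed; recall $\chi_W$ does not depend on $r$). By definition $\chi_W(\phi)=\limsup_{s\to\infty}\tfrac1s\log g(s)$; since $W$ is $\phi_t$-invariant and $(\phi_t)^n=\phi_{tn}$, viewing $\phi_t$ as a diffeomorphism gives $\chi_W(\phi_t)=\limsup_{n\to\infty}\tfrac1n\log g(tn)$, and in particular $\chi_W(\phi_1)=\limsup_{n\to\infty}\tfrac1n\log g(n)$. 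Note $g(0)=\sup_x\operatorname{vol}(W_r(x))\in(0,\infty)$ (finiteness is implicit in $\chi_W$ being well defined), and by the bound obtained next $g(s)\in(0,\infty)$ for every $s\geq0$.

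\emph{Step 1 (uniform distortion).} Since $X$ is $C^1$ the flow is $C^1$, so $(s,y)\mapsto D\phi_s(y)$ is continuous; by compactness of $[-T,T]\times M$, for each $T>0$ there is $C_T\geq1$, depending only on $X$ and $T$, with $\|D\phi_s(y)\|\leq C_T$ for all $|s|\leq T$, $y\in M$. If $N\subset M$ is a $d$-dimensional $C^1$ submanifold and $|s|\leq T$, then $\phi_s|_N$ has derivative of operator norm $\leq C_T$ at every point, hence $\operatorname{vol}(\phi_s(N))\leq C_T^{\,d}\operatorname{vol}(N)$; applying this to $\phi_{-s}$ gives also $\operatorname{vol}(\phi_s(N))\geq C_T^{-d}\operatorname{vol}(N)$. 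Taking $N=\phi_u(W_r(x))$ and then the supremum over $x\in M$ yields
$$
C_T^{-d}\;\leq\;\frac{g(s')}{g(u)}\;\leq\;C_T^{\,d}\qquad\text{whenever }\ |s'-u|\leq T,
$$
equivalently, writing $a_T:=d\log C_T$, one has $|\log g(s')-\log g(u)|\leq a_T$ whenever $|s'-u|\leq T$.

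\emph{Step 2 (comparison of $\limsup$'s, and conclusion).} The elementary fact I would use is: for any $g\colon[0,\infty)\to(0,\infty)$ with the oscillation property of Step 1 and any $t>0$,
$$
\limsup_{s\to\infty}\frac1s\log g(s)\;=\;\limsup_{\,m\to\infty,\ m\in\mathbb{N}}\frac1{tm}\log g(tm).
$$
The inequality $\geq$ is trivial. For $\leq$, given large $s$ let $m=m(s)\in\mathbb{N}$ be the nearest integer to $s/t$, so $|s-tm|\leq t$, hence $|\log g(s)-\log g(tm)|\leq a_t$; moreover, telescoping in $m$ steps of length $t$, $|\log g(tm)|\leq|\log g(0)|+m\,a_t$. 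Since $\bigl|\tfrac1s-\tfrac1{tm}\bigr|=\tfrac{|s-tm|}{s\,tm}\leq\tfrac1{sm}$,
$$
\Bigl|\tfrac1s\log g(s)-\tfrac1{tm}\log g(tm)\Bigr|\;\leq\;\tfrac{a_t}{s}+\tfrac1{sm}\bigl(|\log g(0)|+m\,a_t\bigr)\;\leq\;\tfrac{2a_t}{s}+\tfrac{|\log g(0)|}{s\,m},
$$
which tends to $0$ as $s\to\infty$ (note $m(s)\to\infty$), so the two $\limsup$'s coincide. Combining with Step 1, for every $t>0$,
$$
\chi_W(\phi_t)=\limsup_{m\to\infty}\tfrac1m\log g(tm)=t\cdot\limsup_{m\to\infty}\tfrac1{tm}\log g(tm)=t\cdot\limsup_{s\to\infty}\tfrac1s\log g(s)=t\,\chi_W(\phi),
$$
and taking $t=1$ gives $\chi_W(\phi_1)=\chi_W(\phi)$, which is the proposition. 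The only step that is not bookkeeping is Step 2: the $\limsup$ along the subsequence $s=tm$ could \emph{a priori} be strictly smaller than the full $\limsup$ over $s$, and it is exactly the uniform oscillation bound $a_t$ — which in addition forces $|\log g(tm)|$ to grow at most linearly in $m$, so that the term $\bigl|\tfrac1s-\tfrac1{tm}\bigr|\,|\log g(tm)|$ vanishes even when $g(s)\to0$ (as it does for, e.g., the stable foliation) — that rules this out.
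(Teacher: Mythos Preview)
Your proof is correct and follows essentially the same approach as the paper's: both exploit the uniform bound $\|D\phi_s\|\le C_t$ for $|s|\le t$ to control the oscillation of $g(s)=\sup_x\operatorname{vol}(\phi_s(W_r(x)))$ on windows of length $t$, and then compare the continuous-time $\limsup$ to the one along the arithmetic progression $\{tm\}_{m\in\mathbb N}$. Your write-up is a bit more careful on one point the paper leaves implicit, namely the telescoping bound $|\log g(tm)|\le|\log g(0)|+m\,a_t$ that ensures the term $\bigl|\tfrac1s-\tfrac1{tm}\bigr|\,|\log g(tm)|$ still vanishes when $g(s)\to 0$ (as for the stable foliation).
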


\begin{proof}[Proof of Proposition~\ref{prop:A}]
Given $t>0$, for any $n\in\N$ write $n=tk(n)+l(n)$, for some $k=k(n)\in\N$ and $0\leq l(n)<t$. Notice that $k(n)\to\infty$ as $n\to\infty$. For any $C^1$ disk $A$ in $M$ we clearly have
$$
vol(\phi_l(A)\leq vol(A)\cdot\|D\phi_l\|^{\dim(M)}.
$$
Let $C_t=\sup_{l\in[-t,t]}\|D\phi_l\|^{\dim(M)}$. Then
$$
\frac 1n\log\sup_{x\in M}vol(\phi_n(W_r(x)))=\frac 1{tk(n)+l(n)}\log\sup_{x\in M}vol(\phi_{l(n)}(\phi_t^{k(n)}(W_r(x)))),
$$
then furthermore

\begin{eqnarray*}
\frac 1{t(k+1)}\log\frac 1{C_t}\sup_{x\in M}vol(\phi_t^{k}(W_r(x)))&\leq& \frac 1n\log\sup_{x\in M}vol(\phi_n(W_r(x)))\\
&\leq&\frac 1{tk}\log C_t\sup_{x\in M}vol(\phi_t^{k}(W_r(x))).
\end{eqnarray*}

Taking the limit when $n$ and $k(n)$ go to infinity, we obtain
$$
\chi_W(\phi)=\frac 1t\chi_W(\phi_t).
$$

\end{proof}

\subsection{Proof of Lemma \ref{l:volusc}}

In this section we will prove the Lemma \ref{l:volusc}. It can be generalized for other foliations, not necesarily the unstable one; the unique requirement is that the leaves depend continuously on the map in the $C^1$ topology, and they are uniformly expanding.

\begin{proof}[Proof of Lemma \ref{l:volusc}]

Let $f$ be a $C^1$ partially hyperbolic diffeomorphism, and let

$$
\chi_u(f)=\limsup_{n\rightarrow\infty}\frac 1n\log\sup_{x\in M}vol(f^n(W^u_r(x))).
$$
This means that for any $\ep>0$ there exists $n_0\in\N$ (which we can assume arbitrarily large) such that for any $x\in M$ we have
$$
vol(f^n(W_r^u(x)))<e^{n(\chi_u(f)+\ep)}\hbox{ for any }n\geq n_0.
$$
Since $(x,f)\mapsto W_r^u(x,f)$ is continuous in the $C^1$ topology, there exists a $C^1$ neighborhood $\mathcal U$ of $f$ such that for any $g\in\mathcal U$ the following holds:
$$
vol(g^{n_0}(W_r^u(x)))<e^{n_0(\chi_u(f)+2\ep)}\hbox{ for any }x\in M.
$$
Since $W^u$ is uniformly expanding, if $n_0$ is sufficiently large, we have that, for all $x\in M$ and $g\in\mathcal U$, $g^{-n_0}(W^u_{r/2}(x))\subset W^u_r(g^{-n_0}(x))$. Furthermore there exists a constant $D_r>0$ such that, for any $x\in M$, the ball $W^u_{2r}(x)$ can be covered by $D_r$ other balls of radius $r$ inside the foliation $W^u$. Let $C_r$ be a lower bound for the volumes of the balls of radius $r/2$ inside $W^u$, and let $K_r$ be an upper bound for the volumes of the balls of radius $r$ inside $W^u$, for all $g\in\mathcal U$.

Given $g\in\mathcal U$, $x\in M$, let $S(x,g)$ be a maximal $r$-separated set in $g^{n_0}(W^u_r(x))$ (in the metric induced on the leaves). Then we know that the balls $W_{r/2}^u(y)$, $y\in S(x,r)$ are mutually disjoint. Given an $y\in S(x,r)\subset g^{n_0}(W^u_r(x))$, we get that $g^{-n_0}(W^u_{r/2}(y))\subset W^u_r(g^{-n_0}(y))\subset W^u_{2r}(x)$. Then
$$
\cup_{y\in S(x,g)}W_{r/2}^u(y)\subset g^{n_0}(W^u_{2r}(x)),
$$
and from here and the uniform upper bound on the growth of the volume of balls of radius $r$, we get a bound for the cardinality of $S(x,r)$:
$$
\# S(x,r)\leq \frac{D_r}{C_r}e^{n_0(\chi_u(f)+2\ep)}, \ \ \ \forall x\in M.
$$

On the other hand $g^{n_0}(W^u_r(x))$ can be covered by $\# S(x,r)$ new balls of radius $r$:
$$
g^{n_0}(W^u_r(x))\subset\cup_{y\in S(x,g)}W^u_r(y).
$$

By induction on $k$ one can then show that $g^{kn_0}(W^u_r(x))$ can be covered by at most $\left(\frac{D_r}{C_r}e^{n_0(\chi_u(f)+2\ep)}\right)^k$ new balls of radius $r$, and in particular

$$
vol(g^{kn_0}(W^u_r(x)))\leq K_r\frac{D_r^k}{C_r^k}e^{kn_0(\chi_u(f)+2\ep)},
$$
for  any $k\in\mathbb N$, $x\in M$, $g\in\mathcal U$.

Let $C$ be a bound for the derivative in $\mathcal U$, then for any $g\in\mathcal U$ we have
\begin{eqnarray*}
\chi_u(g)&=&\limsup_{n\rightarrow\infty}\frac 1n\log\sup_{x\in M}vol(g^n(W^u_r(x)))\\
&=&\limsup_{n\rightarrow\infty}\frac 1{k(n)n_0+l(n)}\log\sup_{x\in M}vol(f^{k(n)n_0+l(n)}(W^u_r(x)))\\
&\leq&\limsup_{n\rightarrow\infty}\frac 1{k(n)n_0}\log\sup_{x\in M}vol(f^{k(n)n_0}(W^u_r(x)))\cdot C^{n_0\dim M}\\
&\leq&\limsup_{n\rightarrow\infty}\left(\frac{\dim M\log C}{k(n)}+\frac{\log K_r}{k(n)n_0}+\frac{\log D_r-\log C_r}{n_0}+\chi_u(f)+2\ep  \right)\\
&=&\chi_u(f)+2\ep+\frac{\log D_r-\log C_r}{n_0}.
\end{eqnarray*}

Since $n_0$ can be taken arbitrarily large and $\ep$ can be taken arbitrarily small at the expense of making $\mathcal U$ smaller, and $C_r, D_r$ are bounded, we obtain the upper continuity of $\chi_u$ with respect to the diffeomorphism. The upper semicontinuity of $\chi_s$ is proven in a similar way, using $f^{-1}$.

\end{proof}


\subsection{Proof of Theorem~\ref{thm:topusc}}

This proof is an immediate consequence of Lemma \ref{l:volusc} and the results from \cite {Saghin}, \cite{HSX}. Let $X\in\mathfrak{X}^1(M)$ be a $C^1$-vector field generating a partially hyperbolic flow $(\phi_t)$ with invariant splitting $TM=E^s\oplus E^c \oplus E^u$, so the time-1 map $\phi_1$ is also partially hyperbolic.

Firstly it is well known that $\htop(\phi_1)\ge \chi_u(\phi_1)=\chi_u(\phi)$, and also $\htop(\phi_{1})=\htop(\phi_{-1})\ge \chi_s(\phi)$ (see \cite{SX1} or \cite{Saghin} for example).
From the previous consideration we have
\begin{equation}\label{eq:SX}
\htop(\phi)\ge \max\{\chi_u(\phi), \chi_s(\phi)\}.
\end{equation}

Secondly, we know from \cite{Saghin} that if $\nu$ is a $\phi_1$-invariant and ergodic probability measure then
\begin{equation}\label{eq:Su}
h_{\nu}(\phi_1)
	\leq \chi_u(\phi_1) + \max \lambda^c\dim(E^{cs}).
\end{equation}
where $\lambda^c$ denotes the Lyapunov exponents of $\nu$ in the direction $E^c$. A similar formula can be obtained for $\phi_{-1}$ and $\chi_s(\phi)$:
\begin{equation}\label{eq:Ss}
h_{\nu}(\phi_1)=h_{\nu}(\phi_{-1})
	\leq \chi_s(\phi_1) - \min\lambda^c\dim(E^{cu}).
\end{equation}

If $\nu$ is supported on a fixed point, then clearly $h_{\nu}(\phi_1)=0\leq\chi_u(\phi)$. If $\nu$ is not supported at a fixed point, since it is ergodic, it must contain in its support a point $p\in M$ such that $X(p)\neq 0$. Take a neighborhood $U$ of $p$ such that $m<\|X|_U\|<M$, for some $m,M>0$. Poincar\'e recurrence guarantees that there exists a generic point $q$ of $\nu$ inside $U$ which returns to $U$ infinitely many times under iterates of $\phi_1$. Then
\begin{eqnarray*}
\lambda(\phi_1, X(q))&=&\lim_{n\rightarrow\infty}\frac 1n\log \|D\phi_n(q)\cdot X(q)\|\\
&=&\lim_{n\rightarrow\infty}\frac 1n\log\|X(\phi_n(q)\|\\
&=&0.
\end{eqnarray*}

This shows that one center Lyapunov exponent of $\nu$ must be zero. If $X\in\mathfrak{X}^1_2(M)$, then either $\max \lambda^c=0$ or $\min \lambda^c=0$, so from \ref{eq:Su}, \ref{eq:Ss} we get that $h_{\nu}(\phi)$ is bounded by above by either $\chi_u(\phi)$ or $\chi_s(\phi)$. Together with \ref{eq:SX}, and using the variational principle, we obtain that
$$
\htop(\phi)=\max\{\chi_u(\phi), \chi_s(\phi)\},
$$
which shows the part (1) of the Theorem.

Part (2) follows immediately from the part (1) and from the Lemma \ref{l:volusc}.

Suppose that the conclusion (3) does not hold, then there exists a sequence of diffeomorphisms $f_n$ converging to $\phi_1$ in the $C^1$ topology such that $\htop(f_n)>\htop(\phi)+\delta$ for some $\delta>0$. From the variational principle we have a sequence of probability measures $\mu_n$ invariant for each $f_n$ such that
$$
h_{\mu_n}(f_n)>\htop(\phi)+\delta,\ \ \forall n\geq 1.
$$
Eventually passing to a subsequence, we may assume that $\mu_n$ converges in the weak star topology to a measure $\mu$, which by a standard argument has to be an invariant probability for $\phi_1$.

Since we know from Lemma \ref{l:volusc} that the stable and unstable volume growth are upper semicontinuous with respect to the map, we can assume that
\begin{equation}\label{eq:vgbound}
\chi_u(f_n)<\chi_u(\phi)+\frac {\delta}3,\ \ \chi_s(f_n)<\chi_s(\phi)+\frac {\delta}3,\ \ \forall n\geq 1.
\end{equation}

Without loss of generality let us assume that the central exponents of $\mu$ for $\phi_1$ are nonnegative, otherwise we can do the same considerations for $\phi_{-1}$. Then for $\mu$ almost every $x\in M$ we have
$$
\lim_{k\rightarrow\infty}\frac 1k\log\|D\phi_1^k(x)|_{E^c(x)}\|=0.
$$
Since $\frac 1k\log\|D\phi_1^k(x)|_{E^c(x)}\|$ is uniformly bounded, we get that
$$
\lim_{k\rightarrow\infty}\int_M\frac 1k\log\|D\phi_1^{k_0}(x)|_{E^c(x)}\|d\mu=0,
$$
so there exists $k_0$ such that
$$
\int_M\frac 1{k_0}\log\|D\phi_1^{k_0}(x)|_{E^c(x)}\|d\mu<\frac {\delta}{3\dim(E^{cs})}.
$$
Because $\mu_n$ converges weakly to $\mu$, there exists $n_0$ such that
$$
\int_M\frac 1{k_0}\log\|D\phi_1^{k_0}(x)|_{E^c(x)}\|d\mu_n<\frac {\delta}{3\dim(E^{cs})},\ \ \forall n\geq n_0.
$$
Since $f_n$ converges to $\phi_1$ in the $C^1$ topology, and $E^c$ is continuous with respect to the map, we have that $\|Df_n^{k_0}(x)|_{E^c(x)}\|$ converges uniformly to $\|D\phi_1^{k_0}(x)|_{E^c(x)}\|$, so there exists $n_1>n_0$ such that
$$
\left|\frac 1{k_0}\log\|D\phi_1^{k_0}(x)|_{E^c(x)}\|-\frac 1{k_0}\log\|Df_{n_1}^{k_0}(x)|_{E^c(x)}\|\right|<\frac {\delta}{3\dim(E^{cs})}.
$$
Then
$$
\int_M\frac 1{k_0}\log\|Df_{n_1}^{k_0}(x)|_{E^c(x)}\|d\mu_{n_1}<\frac {2\delta}{3\dim(E^{cs})}.
$$
Estimating the central exponents of $\mu_{n_1}$ for $f_{n_1}$, and using the subadditivity of $\log\|Df_{n_1}^{k_0}(x)|_{E^c(x)}\|$, we get
\begin{eqnarray*}
\max\lambda_i^c(\mu_{n_1},f_{n_1})&=&\lim_{l\rightarrow\infty}\int_M\frac 1{lk_0}\log\|Df_{n_1}^{lk_0}(x)|_{E^c(x)}\|d\mu_{n_1}\\
&\leq&\int_M\frac 1{k_0}\log\|Df_{n_1}^{k_0}(x)|_{E^c(x)}\|d\mu_{n_1}\\
&<&\frac {2\delta}{3\dim(E^{cs})}.
\end{eqnarray*}
In other words the upper central exponent is upper semicontinuous. Using the formulas \ref{eq:Su} and \ref{eq:vgbound} we then obtain
$$
h_{\mu_{n_1}}(f_{n_1})\leq\chi_u(f_{n_1})+\max\lambda_i^c(\mu_{n_1},f_{n_1})\cdot\dim(E^{cs})<\chi_u(\phi)+\delta\leq\htop(\phi)+\delta,
$$
which is a contradiction. This finishes the proof of (3).

\subsection{No upper semicontinuity in a neighborhood}

In this subsection we will prove Theorem \ref{thm:notnear}.

\begin{proof}[Proof of Theorem \ref{thm:notnear}]

Let $\alpha_t$ be the suspension flow of a $C^{\infty}$ hyperbolic diffeomorphism $A$ of the torus $\mathbb T^2$, with constant roof function one, defined on the compact suspension manifold $N$, let $M=N\times\mathbb T^1$, and let $\phi$ be the product of $\alpha$ with the identity flow on $\mathbb T^1$. Clearly $\phi$ is a $C^{\infty}$ partially hyperbolic flow with two dimensional center bundle, and the topological entropy of $\phi_1$ is equal to the topological entropy of $A$. We can find a neighborhood of $\mathbb T^2\times\{0\}\times\mathbb T^1$, which is $C^{\infty}$ diffeomorphic to $K:=\mathbb T^2\times [-\delta,\delta]\times\mathbb T^1$, such that $\alpha_1$ written in this chart is of the form
$$
\alpha_1(x,y,z)=(Ax,y,z),\ \ \forall x\in\mathbb T^2,\ y\in[-\delta,\delta],\ z\in\mathbb T^1.
$$
In other words $\phi_1$ restricted to $K$ is the product between $A$ on $\mathbb T^2$ and the identity on $[-\delta,\delta]\times\mathbb T^1$.

For any $r$, arbitrarily $C^r$ close to the identity on $[-\delta,\delta]\times\mathbb T^1$ there exists a $C^{\infty}$ diffeomorphism $g_0$ wich has a hyperbolic periodic point with a homoclinic loop inside $[-\delta,\delta]\times\mathbb T^1$, and is the identity on a neighborhood of the boundary. This can be done for example by making $g_0$ the time one map of a Hamiltonian flow generated by a smooth Hamiltonian of the form
$$
H(y,z)=\ep_1\rho\left(\frac{d((y,z),(0,0))}{4\delta}\right)+\ep_2y\rho\left(\frac y{2\delta}\right),
$$
for some smooth bump function $\rho$ wich is 1 on $[0,1/2]$ and has the support on $[0,1]$, while $\ep_1$ and $\ep_2$ are small enough. The only ergodic invariant measures of the Hamiltonian  flow are supported on periodic orbits and fixed points (a saddle, a center and degenerate fxed points), so the topological entropy of $g_0$ is zero.

Let $f_0$ be a perturbation of $\phi_1$ such that $f_0=A\times g_0$ on $K$, while outside $K$ we leave $f_0=\phi_1$. Clearly $f_0$ is $C^r$ close to $\phi_1$. We compute the topological entropy of $f_0$:
\begin{eqnarray*}
\htop(f_0)&=&\max\{\htop(f_0|_{K}),\htop(f_0|_{M\setminus K}\}\\
&=&\max\{\htop(A\times g_0|_{K}),\htop(\phi_1|_{M\setminus K}\}\\
&=&\max\{\htop(A)+\htop(g_0),\htop(\phi_1|_{M\setminus K}\}\\
&=&\htop(A),
\end{eqnarray*}
since $\htop(\phi_1|_{M\setminus K})\leq\htop(\phi_1)=\htop(A)$.

Having a homoclinic loop is similar to having a homoclinic tangency of arbitrarily high order, and the standard methods of "snake-like" perturbations developed in \cite{DN} for example show that the topological entropy is not lower semicontinuous at $g_0$. In fact there exists a sequence of $C^{\infty}$ diffeomorphisms $g_n$, $n>0$, converging to $g_0$ in the $C^r$ topology, and there exists an $\lambda>0$ such that $\htop(g_n)>\lambda$ for all $n>0$.

Now we can define the sequence of $C^{\infty}$ diffeomorphisms $f_n$ on $M$ as $f_n=A\times g_n$ on $K$ and $f_n=f_0=\phi_1$ outside $K$. Clearly $f_n$ converges to $f_0$ in the $C^r$ topology. Also
\begin{eqnarray*}
\htop(f_n)&\geq&\htop(f_n|_{K})\\
&=&\htop(A\times g_n)\\
&=&\htop(A)+\htop(g_n)\\
&>&\htop(A)+\lambda\\
&=&\htop(f_0)+\lambda,
\end{eqnarray*}
which shows that the topological entropy is not upper semicontinuous at $f_0$ on the space of $C^r$ diffeomorphisms.

\end{proof}


\section{No lower semicontinuity}\label{s:lsc}

In this section we start by proving Theorem \ref{thm:notoplsc}. The example is somehow similar to the example in \cite{HSX}.

\begin{proof}[Proof of Theorem \ref{thm:notoplsc}]

Let $\alpha_t$ be a $C^{\infty}$ hyperbolic flow on the compact Riemannian manifold $N$, generated by the vector field $X$ on $N$. Let $M=N\times\mathbb T^1$, and $a:\mathbb T^1\rightarrow (0,\infty)$ a nonconstant $C^{\infty}$ function. Let $\phi$ be the flow on $M$ given by
$$
\phi_t(x,s)=(\alpha_{a(s)t}(x),s),\ \ \forall t\in\mathbb R,\ \ \forall x\in N,\ \ \forall s\in\mathbb T^1.
$$
Clearly $\phi$ is a partially hyperbolic flow generated by the vector field $Y(x,s)=(a(s)X(x),0)$ on $N$, the center bundle is two dimensional, satisfying $E^c(x,s)=E^c(x)\times T\mathbb T^1$. $M$ is subfoliated by the invariant submanifods $N\times\{s\}$, $s\in\mathbb T^1$, and on each such submanifold the topological entropy is
$$
\htop(\phi|_{N\times\{s\}})=a(s)\cdot\htop(\alpha),
$$
so we get that
$$
\htop(\phi)=\max_{s\in\mathbb T^1}a(s)\cdot\htop(\alpha).
$$

Let $Y^r$ be the $C^{\infty}$ family of $C^{\infty}$ vector fields on $M$ given by $Y^r(x,s)=(a(s)X(x),r)$, for any $r\in(-1,1)$, and let $\phi^r$ be the $C^{\infty}$ family of $C^{\infty}$ flows generated by $Y^r$. Clearly $\phi_0=\phi$. One can easily check that
$$
\phi^r_t(x,s)=(\alpha_{\int_0^ta(s+ur)du}(x),s+tr).
$$
In particular $\phi^r$ are all partially hyperbolic with the same center bundle as $\phi$. Observe that
$$
\phi^r_{\frac 1r}(x,s)=(\alpha_{\int_0^{\frac 1r}a(s+ur)du}(x),s+1)=(\alpha_{\frac 1r\int_{\mathbb T^1}a(v)dv}(x),s),
$$
i.e. $\phi^r_{\frac 1r}$ is the product of $\alpha_{\frac 1r\int_{\mathbb T^1}a(v)dv}$ on $M$ with the identity on $\mathbb T^1$. Calculating the topological entropy of $\phi^r$ for $r\neq 0$ we obtain
\begin{eqnarray*}
\htop(\phi^r)&=&r\htop(\phi^r_{\frac 1r})\\
&=&r\htop(\alpha_{\frac 1r\int_{\mathbb T^1}a(v)dv})\\
&=&\int_{\mathbb T^1}a(v)dv\cdot\htop(\alpha).
\end{eqnarray*}
Since $a$ is not constant, for $r\neq 0$ we have
$$
\htop(\phi^r)=\int_{\mathbb T^1}a(v)dv\cdot<\max_{s\in\mathbb T^1}a(s)=\htop(\phi^0),
$$
so $\htop(\phi^r)$ is not lower semicontinuous at $r=0$.

\end{proof}

Next we will show Proposition \ref{prop:LSC}.

\begin{proof}[Proof of Proposition \ref{prop:LSC}]

Assume that  $\htop(\phi^{X_0})> \min\{\chi_u(\phi^{X_0}), \chi_s(\phi^{X_0})\}$. Without loss of generality less us assume that $\htop(\phi^{X_0})>\chi_u(\phi^{X_0})$. Using the invariance principle, we can choose $\mu$ an invariant ergodic probability for $\phi^{X_0}$ such that $h_\mu(\phi^{X_0})>\chi_u(\phi^{X_0})$, and $h_\mu(\phi^{X_0})>\htop(\phi^{X_0})-\ep$, for $\ep$ arbitrarily close to zero. Recall that \eqref{eq:Su} gives us
$$
h_\mu(\phi^{X_0})\leq \chi_u(\phi^{X_0}) + \max{\lambda^c} \dim(E^{cs}).
$$
This implies that $\mu$ has a center exponent strictly greater than zero, so it is a hyperbolic invariant measure. Then the lower semicontinuity of the topological entropy will follow from the following continuous time Katok's approximation lemma due to Lian and Young (this is where we need the $C^2$ condition on $X_0$).

\begin{theorem} \cite[Theorem~D']{LY12}\label{LianYoung}
Take $\phi_1$ be the time one map of a $C^2$ flow and let $\mu$ be a $\phi_1$-invariant hyperbolic probability measure so that $h_\mu(\phi_1)>0$. Then, given $\ep>0$ there exists a smooth transversal $\mathcal D$ to the flow, a return map $T : \mathcal D \to \mathcal D$ and a bi-invariant horseshoe $\Omega\subset \mathcal D$ so that $\htop(\phi_1\mid_{\hat \Omega}) \ge h_\mu(\phi_1) -\ep$, where $\hat\Omega=\bigcup_{t\in \mathbb R} \phi_t(\Omega)$.
\end{theorem}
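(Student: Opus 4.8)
The plan is to project $\mu$ onto a cross-section of the flow, invoke Katok's horseshoe theorem in a quantitative form for the Poincar\'e return map, and then lift the resulting horseshoe back to the flow, using Abramov's formula to keep track of entropy. Since $h_\mu(\phi_1)>0$, the measure $\mu$ gives no mass to the zero set of $X$, so $\mu$-a.e.\ point has $X\neq 0$; one can then pick a small smooth codimension-one disk $\mathcal D$ transverse to $X$ through a point of $\supp\mu$ at which the transverse conditional of $\mu$ has positive density, so that $\mu$ is carried by the flow-saturate of $\mathcal D$. The first-return time $\tau\colon\mathcal D\to(0,\infty)$ is then $C^2$ (implicit function theorem, the flow being $C^2$) and the first-return map $T$, $T(x)=\phi_{\tau(x)}(x)$, is $C^2$ and preserves the normalized transverse measure $\mu_{\mathcal D}$, which may be taken ergodic. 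By Abramov's formula $h_{\mu_{\mathcal D}}(T)=\bar\tau\,h_\mu(\phi_1)$ with $\bar\tau=\int\tau\,d\mu_{\mathcal D}>0$, and the Lyapunov spectrum of $(T,\mu_{\mathcal D})$ is that of $(\phi_1,\mu)$ with the zero exponent along $X$ deleted; in particular $\mu_{\mathcal D}$ is a hyperbolic measure for $T$ with positive entropy.

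Next I would apply the quantitative Katok horseshoe theorem to the $C^{1+\alpha}$ map $T$ and the hyperbolic measure $\mu_{\mathcal D}$: for every $\delta>0$ there should be a compact, locally maximal, topologically transitive, uniformly hyperbolic set (horseshoe) $\Omega\subset\mathcal D$ with $\htop(T|_\Omega)\ge h_{\mu_{\mathcal D}}(T)-\delta$, which moreover can be placed inside an arbitrarily small neighbourhood of $\supp\mu_{\mathcal D}$ and chosen so that every $T$-invariant probability $\nu$ carried by $\Omega$ satisfies $\bigl|\int\tau\,d\nu-\bar\tau\bigr|<\delta$. This last property comes from building $\Omega$ out of long segments of a $\mu_{\mathcal D}$-generic orbit (Pesin blocks, Lyapunov charts, Anosov closing/shadowing), which forces the Birkhoff averages of the continuous function $\tau$ along $\Omega$-orbits to stay near $\int\tau\,d\mu_{\mathcal D}$.

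Finally, set $\hat\Omega=\bigcup_{t\in\mathbb{R}}\phi_t(\Omega)$. Since $\Omega$ is uniformly hyperbolic for the return map $T$, $\hat\Omega$ is a compact $\phi$-invariant uniformly hyperbolic set and $\phi|_{\hat\Omega}$ is conjugate to the suspension flow of $T|_\Omega$ with roof $\tau$, so $h:=\htop(\phi_1|_{\hat\Omega})$ is the unique root of the pressure equation $P(T|_\Omega,-h\tau)=0$. Using the variational principle for pressure and the bound $\bigl|\int\tau\,d\nu-\bar\tau\bigr|<\delta$ for all $T|_\Omega$-invariant $\nu$, one gets $0=P(T|_\Omega,-h\tau)\ge\htop(T|_\Omega)-h(\bar\tau+\delta)$, hence
\begin{equation*}
h\ \ge\ \frac{\htop(T|_\Omega)}{\bar\tau+\delta}\ \ge\ \frac{\bar\tau\,h_\mu(\phi_1)-\delta}{\bar\tau+\delta},
\end{equation*}
which is $\ge h_\mu(\phi_1)-\ep$ once $\delta$ is chosen small relative to $\ep$ and $\bar\tau$. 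Since $h_\mu(\phi)=h_\mu(\phi_1)$ and $\htop(\phi|_{\hat\Omega})=\htop(\phi_1|_{\hat\Omega})$, this is precisely the asserted inequality, with $\mathcal D$, $T$, $\Omega$ as required.

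The main obstacle is the quantitative Katok construction: beyond the standard entropy lower bound $\htop(T|_\Omega)\ge h_{\mu_{\mathcal D}}(T)-\delta$, one needs the horseshoe localized near $\supp\mu_{\mathcal D}$ with uniform control on the Birkhoff averages of $\tau$ over it, so that the suspension over $\Omega$ has an essentially constant roof; this is where Pesin theory and the $C^2$ (i.e.\ $C^{1+\alpha}$) hypothesis are genuinely used, and one must check that the gluing preserves both properties at once. Secondary technical points are Abramov's formula together with the identification of the return cocycle's Lyapunov spectrum in the first step, and the fact, used in the last step, that the flow-suspension of a horseshoe is uniformly hyperbolic with topological entropy determined by $P(T|_\Omega,-h\tau)=0$.
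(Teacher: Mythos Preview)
The paper does not give its own proof of Theorem~\ref{LianYoung}: it is quoted verbatim as \cite[Theorem~D']{LY12} and used as a black box inside the proof of Proposition~\ref{prop:LSC}. So there is no ``paper's proof'' to compare against; what you have written is a sketch of how one might recover the finite-dimensional special case of Lian--Young's result by reducing to Katok's discrete-time theorem via a Poincar\'e section. As a strategy this is the classical route and is in spirit correct, but it is not what \cite{LY12} actually does (their setting is semiflows on Hilbert spaces, where global cross-sections and Abramov's formula are not available, so they build the Pesin theory and the horseshoe directly for the time-one map).

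A couple of genuine gaps in your outline are worth flagging. First, the statement only assumes $\mu$ is $\phi_1$-invariant, not $\phi$-invariant; your cross-section/Abramov argument needs flow-invariance, so you would have to first average $\mu$ over $t\in[0,1]$ (and check that hyperbolicity and the entropy bound survive), or else observe that in the paper's application $\mu$ is in fact chosen $\phi$-invariant. Second, and more seriously, Katok's horseshoe theorem is stated for $C^{1+\alpha}$ diffeomorphisms of a \emph{compact} manifold, whereas your first-return map $T$ is defined only on an open subset of the disk $\mathcal D$, with $\tau$ typically unbounded and discontinuous near the set where orbits graze $\partial\mathcal D$; you cannot invoke Katok's theorem off the shelf. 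The standard fix is to build the horseshoe directly for the flow from long Pesin-regular orbit segments and the flow version of the closing/shadowing lemma (which is essentially what Lian--Young do), rather than to pass through a single local section. Your control of $\int\tau\,d\nu$ and the pressure computation in the last step are fine once a legitimate horseshoe for $T$ has been produced, but producing it is exactly the nontrivial content of the theorem.
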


Since an invariant horseshoe is persistent among nearby flows, and the entropy restricted to the horseshoe is continuous with respect to the flow (see \cite{Contreras,KKPW}), the lower semicontinuity of the topological entropy at $X_0$ follows.

\end{proof}


\section{Entropy expansiveness and upper semicontinuity of the entropy function}

In this section we will prove Theorem \ref{thm:metrusc}. We will only show the entropy expansiveness, because the upper semicontinuity of the metric entropy function and Corollaries \ref{c:estates} and \ref{c:pse} are standard consequences.

As we said before, the central distribution may not be, in general, integrable. The knowledge of the existence of such center manifolds would simplify considerably the proof. Since we do not necesarily have center leaves, we need to  use the concept of "fake'' foliation. It turns out that such center fake foliations are suficient for our purpose. The following proposition is a standard tool in this area, for a proof see \cite{BW} for example.

\begin{proposition}\label{p:fakefol}
Let $f\colon M\to M$ be a $C^1$ partially hyperbolic diffeomorphism with decomposition $TM=E^u\oplus E^c\oplus E^s$. 
For any $\vep>0$, there exist constants $r_1>0$, $C>0$ such that any neighborhood $B(p,r_1)$ is foliated by foliations 
$\hW^u_p$, $\hW^c_p$, $\hW^s_p$, $\hW^{cu}_p$ and $\hW^{cs}_p$ with the following properties, for each $\be\in\{u,s,c,cs,cu\}$:
\begin{itemize}
\item [(i)] Almost tangency to invariant distributions. For each $q\in B(p,r_1)$, the leaf $\hW^{\be}_p$ is $C^1$ and the tangent 
space $T_q\hW^{\be}_p(q)$ lies in a cone of radius $\vep$ about $E^{\be}(q)$.
\item [(ii)] Local invariance. For each $r<r_1/C$ and for each $q\in B(p, r)$,
\[f(\hW^{\be}_p(q,r))\subset \hW^{\be}_{f(p)}(f(q),Cr)\quad\text{and}\quad f^{-1}(\hW^{\be}_p(q,r_1))\subset \hW^{\be}{\be}_{f^{-1}(p)}(f^{-1}(q),Cr).\]
\item [(iii)] Coherence. $\hW^s_p$ and $\hW^c_p$ subfoliate $\hW^{cs}_p$, while $\hW^u_p$ and $\hW^c_p$ subfoliate $\hW^{cu}_p$.
\end{itemize}
\end{proposition}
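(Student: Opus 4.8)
The plan is to reduce the statement to the classical Hadamard--Perron theorem for sequences of maps that are small $C^1$-perturbations of linear partially hyperbolic maps, applied after localizing the dynamics in exponential charts. First I would localize the dynamics near each point: fix $p\in M$ and let $\bar f_p=\exp_{f(p)}^{-1}\circ f\circ\exp_p$, defined on a ball $B(0,r_0)\subset T_pM$ for some uniform $r_0>0$, uniformity coming from compactness of $M$. Using a smooth bump function I would extend $\bar f_p$ to a diffeomorphism $\hat f_p\colon T_pM\to T_{f(p)}M$ that coincides with $\bar f_p$ on a smaller ball $B(0,r_1)$ and with the linear map $D_pf$ outside $B(0,2r_1)$. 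Because $Df$ is uniformly continuous and $\exp$ is $C^1$-close to the identity at small scales, by shrinking $r_1$ one makes $\hat f_p$ globally $C^1$-close, uniformly in $p$, to the linear partially hyperbolic map $D_pf\colon E^u_p\oplus E^c_p\oplus E^s_p\to E^u_{f(p)}\oplus E^c_{f(p)}\oplus E^s_{f(p)}$.

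Next, for $q\in B(p,r_1)$ set $v_q=\exp_p^{-1}(q)$ and consider the orbit of $v_q$ under the sequence of global diffeomorphisms $(\hat f_{f^n(p)})_{n\in\integer}$; this orbit is well defined precisely because we extended each map to all of the relevant tangent space. Applying the graph-transform machinery to this sequence of uniformly $C^1$-perturbed linear partially hyperbolic maps, I obtain through $v_q$ local invariant manifolds $\widehat W^u,\widehat W^s,\widehat W^{cu},\widehat W^{cs}$, each a $C^1$ graph over the corresponding subspace $E^{\be}_p$ whose tangent planes lie in a cone of radius $\vep$ about $E^{\be}$ (the cone width being controlled by the Lipschitz constant of the graph, which is small once $r_1$ is small and the domination is invoked), and one sets $\widehat W^c=\widehat W^{cs}\cap\widehat W^{cu}$. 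Pushing these forward by $\exp_p$ and letting $q$ range over $B(p,r_1)$ yields the families $\hW^{\be}_p$. Property (i) is then built into the construction. For the coherence (iii), the key point is that $E^s\subset E^{cs}$ and $E^c\subset E^{cs}$ are themselves invariant under the linear maps, so the graph transform restricted to graphs lying inside the $\widehat W^{cs}$-leaves reproduces the $\widehat W^s$- and $\widehat W^c$-leaves, i.e. the latter subfoliate $\widehat W^{cs}$; similarly for $\widehat W^{cu}$. For local invariance (ii), on the ball of radius $r<r_1/C$ around $q$ the map $f$ agrees, in the charts, with $\hat f_p$, which carries the fake leaf through $v_q$ into the fake leaf through $\hat f_p(v_q)$; the constant $C$ absorbs the distortion of $\exp$ and a bound on $\|Df\|$, giving $f(\hW^{\be}_p(q,r))\subset\hW^{\be}_{f(p)}(f(q),Cr)$, and symmetrically for $f^{-1}$.

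The main obstacle is the Hadamard--Perron step together with the verification that the local invariant manifolds built through the different points $q$ genuinely assemble into foliations of $B(p,r_1)$, that is, that two fake leaves of the same index are either disjoint or equal. This rests on the uniqueness part of the graph transform: each such leaf is the graph of a map with a uniformly bounded Lipschitz constant over its model subspace, so two leaves that met would be forced to be tangent along an entire orbit and hence coincide by the contraction property of the graph transform. Carrying this out with the constants $r_1$, $C$ and the cone width $\vep$ uniform over $p\in M$ is the heart of the argument; the center fake foliation $\widehat W^c$ is the most delicate case, since $E^c$ is only neutral, and there one uses crucially that the splittings $E^s\oplus E^c$ and $E^c\oplus E^u$ are dominated so that the relevant graph transforms remain contractions in the appropriate direction. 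As this is a standard construction, I would simply cite \cite{BW} for the full details.
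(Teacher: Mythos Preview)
Your proposal is correct and matches the paper exactly: the paper gives no proof at all but simply cites \cite{BW}, and your sketch is precisely the Burns--Wilkinson construction (localize in exponential charts, globalize via bump functions to perturbations of $D_pf$, run the graph transform for the sequence $(\hat f_{f^n(p)})_n$, and intersect $\widehat W^{cs}\cap\widehat W^{cu}$ to get $\widehat W^c$), followed by the same citation. There is nothing to add.
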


The basic idea of the proof is the following. By standard arguments, the Bowen balls must lie inside the fake central leaves. Because we consider the time one map of a flow without fixed points, the Bowen balls must be "subfoliated" by flow curves. Then we construct $(n,\ep)$ spanning sets for the Bowen balls by reducing to the one dimensional situation, where we can use the order: we make a selection in the transversal direction, taking a number of flow curves which span the transversal direction, and then we select spanning points on each selected flow curve. In order to deal with the possible shear of the flow curves, we have to select the flow curves to be close enough, and we use again the fact that we have a flow without fixed points.

\begin{proof}[Proof of Theorem \ref{thm:metrusc}]

We denote by $l(\phi_{[0,t]}(x))$ the length of the flow curve from $x$ to $\phi_t(x)$. Without loss of generality, let us assume that the the norm of the vector field $X$ generating $\phi$ is between $1/2$ and $2$, this can be done eventally by changing the Riemannian metric. This implies that for any $t,s\in\mathbb R$ and $x\in M$, we have
$$
\frac t2\leq l(\phi_{[0,t]}(x))\leq 2t
$$
and
$$
\frac 14\leq\frac{l(\phi_{[0,t]}(x))}{l(\phi_{[s,t+s]}(x))}\leq 4.
$$
We can also assume that $r_1$ in Proposition \ref{p:fakefol} is small enough such that for any $y, \phi_t(y)\in\hW^c_x(r_1)$, the different distances $d_M(y,\phi_t(y)),\ d_{\hW^c_x}(y,\phi_t(y))$ and $l(\phi_{[0,t]}(y))$ are comparable up to a constant close to one.

Standard arguments which are used for example in \cite{CY,DFPV,LVY} imply that there exist $0<r_2<\frac{r_1}2$ sufficiently small, such that for all $0<r<r_2$, the Bowen ball $B_{\infty}(x,r)$ must be inside the fake central leaf $\hW^c_x(r_1/2)$, this is because the unstable and stable (fake) foliations are uniformly expanded by $\phi_1$ or $\phi_{-1}$.

{\bf Fake center leaves, flow curves and Bowen balls.} Let $T_x$ be a family of uniformly smooth curves in each $\hW^c_x$ which is uniformly transverse to the vector field $X$. Let us point out that $X$ is not necesarily tangent to $\hW^c_x$, so define the vector field $Y_x$ on $\hW^c_x(r_1)$ to be the orthogonal projection of $X$ to $T\hW^c_x(r_1)$. Eventually making $r_1$ smaller, we can assume that the vector fields $Y_x$ are uniformly bounded away from zero and infinity, uniformly transverse to $T_x$, uniformly $C^1$, and $C^1$ close to $X$. Let $\phi^x$ be the flow generated by the vector field $Y_x$ inside $\hW^c_x(r_1)$. Using this flow we can define (again for a small enough $r_1$) a projection $\pi_x:\hW^c_x(r_1/2)\rightarrow T_x$, where $\pi_x(y)$ is the unique intersection of the flow curve of $\phi^x$ passing through $y$ with $T_x$.

We make the following observation: if $y\in B_{\infty}(x,r)$ then $\phi_t(y)\in B_{\infty}(x,r+2|t|)$. As a consequence, $\phi_{[-t,t]}(B_{\infty}(x,r))\subset B_{\infty}(x,r+2t)$. Let $r<\frac{r_2}{17}$, so we have $\phi_{[-8r,8r]}(B_{\infty}(x,r))\subset B_{\infty}(x,17r)\subset\hW^c_x(r_1/2)$. In this case we have that on $\phi_{[-8r,8r]}(B_{\infty}(x,r))$, $Y_x$ coincides with $X$ and $\phi^x$ coincides with $\phi$. Let $J_x=\pi_x(B_{\infty}(x,r))=\phi_{[-4r,4r]}(B_{\infty}(x,r))\cap T_x\subset T_x(2r)$. Then $J_x\subset\phi_{[-4r,4r]}(B_{\infty}(x,r))$, which is equivalent to $B_{\infty}(x,r)\subset\phi_{[-4r,4r]}(J_x)$, and furthermore
$$
B_{\infty}(x,r)\subset\phi_{[-4r,4r]}(J_x)\subset\phi_{[-8r,8r]}(B_{\infty}(x,r))\subset\hW^c_x(r_1/2).
$$

In other words, we know that the Bowen ball $B_{\infty}(x,r)$ is inside the union of the flow curves $\phi_{[-4r,4r]}(y)$, for $y\in J_x$, where $J_x$ is a subset of the transversal $T_x(4r)$, and the union of these curves is inside the fake center foliation.

{\bf Uniform local flow charts for fake center leaves.}  For every $x\in M$ let $\gamma_x:[-4r,4r]\rightarrow T_x(4r)$ be a parametrization by length. We define the charts $\psi_x:[-4r,4r]^2\rightarrow\phi^x_{[-4r,4r]}(T_x(4r))\subset\hW^c_x(r_1/2)$ by the formula
$$
\psi_x(a,b)=\phi^x_a(\gamma_x(b)), \forall a,b\in[-4r,4r].
$$
In this charts the transversal $T_x(4r)$ is the vertical segment $\{0\}\times[-4r,4r]$, and the flow $\phi^x$ is the horizontal translation. Clearly the charts $\psi_x$ are uniformly $C^1$. Let $\gamma^{-1}(J_x)=I_x$, and since $\phi$ and $\phi^x$ coincide on $\phi_{[-4r,4r]}(J_x)$ we have that $B_{\infty}(x,r)\subset\psi_x([-4r,4r]\times I_x)$.

Analyzing the inverse of $\psi_x$ we get that
$$
\psi_x^{-1}(y)=(t_x(y), \gamma_x^{-1}(\pi_x(y))),
$$
where $t_x(y)\in[-4r,4r]$ is such that $\phi^x_{t_x(y)}(\pi_x(y))=y$, or $\phi^x_{-t_x(y)}(y)=\pi_x(y)$.

Let $f_x:=\psi_{\phi_1(x)}^{-1}\circ\phi_1\circ\psi_x:U_x\rightarrow[-4r,4r]^2$, where $U_x\subset[-4r,4r]^2$ is the maximal domain of $f_x$. Since $\phi_1(B_{\infty}(x,r))=B_{\infty}(\phi_1(x),r)$ we get that $\psi_x^{-1}(B_{\infty}(x,r))\subset U_x$. Remember that the local charts take horizontal segments in $[-4r,4r]\times I_x$ to flow curves for $\phi$, and $\phi_1$ preserves the flow curves. This means that for any $b\in I_x$, the horizontal segment $([-4r,4r]\times\{b\})\cap U_x$ is mapped by $f_x$, as a translation, inside a segment $[-4r,4r]\times\{b'\}$, for some $b'\in I_{\phi_1(x)}$. In other words, there exist $\alpha_x:I_x\rightarrow[-4r,4r]$ and $\beta_x:I_x\rightarrow I_{\phi_1(x)}$ such that, for any $(a,b)\in\psi_x^{-1}(B_{\infty}(x,r))$, we have
$$
f_x(a,b)=(a+\alpha_x(b),\beta_x(b)).
$$

This can also be seen from a direct computation of $f_x$. If $(a,b)\in\psi_x^{-1}(B_{\infty}(x,r))$ then
$$
f_x(a,b)=\left( t_{\phi_1(x)}(\phi_{a+1}(\gamma_x(b))), \gamma_{\phi_1(x)}^{-1}(\pi_{\phi_1(x)}(\phi_{a+1}(\gamma_x(b))))\right).
$$
The second component $\beta_x(b)$ is clearly independent of $a$, since $z=\pi_{\phi_1(x)}(\phi_{a+1}(\gamma_x(b)))$ is independent of $a$. We also have that
$$
\phi_{  t_{\phi_1(x)}(\phi_{a+1}(\gamma_x(b)))}(z)=\phi_{a+1}(\gamma_x(b)),
$$
so $\alpha_x(b)=t_{\phi_1(x)}(\phi_{a+1}(\gamma_x(b)))-a$ depends only on $b$ and not on $a$. It is not difficult to show that $\beta_x$ is in fact invertible, by applying similar arguments to $\phi_{-1}$. In fact we can show more than that, $\beta_x$ must be monotone. In order to show that, we observe that $\beta_x$ is the restriction to $I_x$ of the composition of $\gamma_{\phi_1(x)}^{-1}$, which is clearly monotone, with the map $\tilde\beta_x:[-2r,2r]\rightarrow T_{\phi_1(x)}$,
$$
\tilde\beta_x(b)=\pi_{\phi_1(x)}(\phi_1(\gamma_x(b))).
$$
By choosing $r_1$ (or $r$) sufficiently small, we can guarantee that $\phi_1(T_x(2r))$ is transversal to $Y_{\phi_1(x)}$, and then $\tilde\beta$ does not have critical points, so it is also monotone.

{\bf Construction of spanning sets.} The functions $\psi_x$ are uniformly $C^1$, so they are uniformly Lipschitz. This implies that for any $\vep>0$, there exists $\vep'>0$ such that, for any $x\in M$, $p,q\in[-4r,4r]^2$ with $d(p,q)<\vep'$, then $d(\psi_x(p),\psi_x(q))<\vep$. In other words, if we construct an $(n,\vep')$ spanning set for the sequence $(f_{\phi_n(x)})_{n\geq 0}$, then applying $\psi_x$ we will obtain an $(n,\vep)$ spanning set for $\phi_1$.

Since the charts $\psi_x$ are uniformly $C^1$, we know that the maps $f_x$ are also uniformly $C^1$, in particular they are all Lipschitz for some Lipschitz constant $L>0$. This in turns implies that the functions $\alpha_x$ and $\beta_x$ are all Lipschitz with Lipschitz constant $L$.

Now for every $n\geq 0$, let $S_n$ be a $\delta/2$-spanning set in $I_{\phi_n(x)}$ (we will specify $\delta$ later). The cardinality of $S_n$ is at most $8r/\delta$. Let
$$
S_I=\cup_{i=0}^{n-1}\beta_x^{-1}\circ\beta_{\phi_1(x)}^{-2}\circ\dots\circ\beta_{\phi_{i-1}(x)}^{-1}(S_i).
$$

Then the cardinality of $S_I$ is at most $8nr/\delta$, and $S$ is $(n,\delta)$ spanning on $I_x$ for the sequence of maps $(\beta_{\phi_n(x)})_{n\geq 0}$, in the sence that for any $b\in I_x$, there exists $b'\in S_I$ such that $|\beta_{\phi_{i-1}(x)}\circ\dots\circ\beta_x(y)-\beta_{\phi_{i-1}(x)}\circ\dots\circ\beta_x(y')|<\delta$ for all $0\leq i\leq n-1$.

Let $S_h$ be an $\vep'/2$ spanning set in $[-4r,4r]$, with cardinality $16r/\vep'$, and let $S=S_h\times S_I$ inside $[-4r,4r]\times I_x$. Observe that the cardinality of $S$ is at most $\frac{128nr}{\delta\vep'}$. We claim that, for $\delta$ small enough, $S$ is a $(n,\vep')$ spanning set of $\psi_x^{-1}(B_{\infty}(x,r))$ for the sequence of maps $f_{\phi_i(x)}$, $0\leq i\leq n-1$.

Let us use the notation $\beta^k=\beta_{\phi_{k-1}(x)}\circ\dots\circ\beta_x$, $f^n=f_{\phi_{k-1}(x)}\circ\dots\circ f_x$, for all $k\geq 1$. By the choice of $S=S_h\times S_I$, one can see that given any $(a,b)\in\psi_x^{-1}(B_{\infty}(x,r))$, there exists an $(a',b')\in S$ such that
\begin{eqnarray*}
|a-a'|&<&\frac{\vep'}2;\\
|\beta^i(b)-\beta^i(b')|&<&\delta,\ \ \forall 0\leq i\leq n-1.
\end{eqnarray*}

Then we get that
\begin{eqnarray*}
|f^i(a,b)-f^i(a',b')|&=&|\beta^{i-1}(b)-\beta^{i-1}(b')|+|(a-a')+(\alpha_x(b)-\alpha_x(b'))+\dots \\
&&\dots +(\alpha_{\phi_{i-1}(x)}(\beta^{i-2}(b))-\alpha_{\phi_{i-1}(x)}(\beta^{i-2}(b')))|\\
&<&\delta+\frac{\vep'}2+(i-1)L\delta.
\end{eqnarray*}

If we choose $\delta=\frac{\vep'}{2(1+nL)}$ then we obtain that $|f^i(a,b)-f^i(a',b')|<\vep'$, for all $0\leq i\leq n-1$, so indeed $S$ is $(n,\vep')$ spanning. As we remarked before, this will imply that $\psi_x(S)$ is $(n,\vep)$ spanning for $B_{\infty}(x,r)$ and $\phi_1$, and it has the cardinality bounded from above by
$$
|\psi_x(S)|=|S|\leq\frac{128nr}{\delta\vep'}=\frac{256n(1+nL)r}{\vep'^2}.
$$

Since $\lim_{n\rightarrow\infty}\frac 1n\log\frac{256n(1+nL)r}{\vep'^2}=0$, this allows us to conclude that the local entropy vanishes, so $\phi_1$ is entropy expansive.

\end{proof}


\bibliographystyle{plain}
\def\cprime{$'$}
\begin{bibdiv}
\begin{biblist}

\bib{Asaoka}{article}{
      author={Asaoka, M.},
       title={Hyperbolic sets exhibiting $C^1$-persistent homoclinic tangency for higher dimensions},
        date={2008},
     journal={Proc. Amer. Math. Soc.},
      volume={136},
       pages={677--686},
}

\bib{Bowen}{article}{
      author={Bowen, R.},
       title={Entropy-expansive maps},
        date={1972},
     journal={Trans. Amer. Math. Soc.},
      volume={164},
       pages={323-331},
}

\bib{BFF}{article}{
      author={Boyle, M.},
      author={Fiebig, D.},
      author={Fiebig, U.},
       title={Redidual entropy, conditional entropy, and subshift covers},
        date={2002},
     journal={Forum Math.},
      volume={14},
       pages={713--757},
}

\bib{BW}{article}{
      author={Burns, K.},
      author={Wilkinson A.},
       title={On the ergodicity of partially hyperbolic system},
        date={2010},
     journal={Ann. of Math.},
      volume={171},
       pages={451--489},
}

\bib{Buzzi}{article}{
      author={Buzzi, J.},
       title={Intrinsic ergodicity for smooth interval maps},
        date={1979},
     journal={Israel J. Math},
      volume={100},
       pages={125\ndash161},
}

\bib{BCF}{article}{
      author={Buzzi, J.},
      author={Crovisier, S.},
      author={Fisher, T.},      
       title={Entropy of $C^1$ diffeomorphisms without dominated splitting},
     journal={ArXiv e-prints},
     eprint ={1606.01765},
}

\bib{CY}{article}{
      author={Cao, Y.},
      author={Yang, D.},
       title={On pesin’s entropy formula for dominated splittings without mixed behavior},
        date={2016},
     journal={J. Differential Equations},
      volume={261},
       pages={3964\ndash 3986},
}

\bib{CT}{article}{
      author={Catalan, T.},
      author={Tahzibi, A.},
       title={A lower bound for topological entropy of generic non-Anosov symplectic diffeomorphisms},
        date={2014},
     journal={Ergodic Theory Dynam. System},
      volume={34},
       pages={1503\ndash –-1524},
}

\bib{Contreras}{article}{
      author={Contreras, G.},
       title={Regularity of topological and metric entropy of hyperbolic flows},
        date={1992},
     journal={Math. Z.},
      volume={210},
       pages={97\ndash 111},
}

\bib{DN}{article}{
      author={Newhouse, S.},
      author={Downarowicz, T.}
       title={Symbolic extensions and smooth dynamical systems},
        date={2005},
     journal={Invention Math.},
      volume={160},
       pages={453\ndash 499},
}

\bib{DF}{article}{
      author={Dias, L. J.},
      author={Fisher, T.},
       title={Symbolic extensions and partially hyperbolic diffeomorphisms},
        date={2011},
     journal={Discrete Contin. Dyn. Syst.},
      volume={29},
       pages={1419\ndash 1441},
}

\bib{DFPV}{article}{
      author={Dias, L. J.},
      author={Fisher, T.},
      author={Pacifico, M. J.},
      author={Vieitez, L.},
       title={Entropy-expansiveness for partially hyperbolic diffeomorphisms},
        date={2012},
     journal={Discrete Contin. Dyn. Syst.},
      volume={32},
       pages={4195\ndash 4207},
}

\bib{HSX}{article}{
      author={Hua, Y.},
      author={Saghin, R.},
      author={Xia, Z.},
       title={Topological entropy and partially hyperbolic diffeomorphisms},
        date={2008},
     journal={Ergodic Theory Dynam. Systems},
      volume={28},
      number={3},
       pages={843\ndash 862},
}

\bib{Katok}{article}{
      author={Katok, A.},
       title={Lyapunov exponents, entropy and periodic orbits for diffeomorphisms},
        date={1980},
     journal={Inst. Hautes \'Etudes Sci. Publ. Math.},
      volume={51},
       pages={137\ndash 173},
}

\bib{KKPW}{article}{
      author={Katok, A.},
      author={Knieper, M.},
      author={Pollicott, M.},
      author={Weiss, H.},      
       title={Differentiability and analyticity of topological entropy for Anosov and geodesic flows},
        date={1989},
     journal={Invent. Math.},
      volume={98},
       pages={581\ndash 597},
}

\bib{LY12}{article}{
      author={Liang, Z.},
      author={Young, L.-S.},      
       title={Lyapunov exponents, periodic orbits and horseshoes for semiflows on Hilbert spaces},
        date={2012},
     journal={Journal Amer. Math. Soc.},
      volume={25},
      number={3},
       pages={637\ndash665},
}

\bib{LVY}{article}{
      author={Liao, G.},
      author={Viana, M.},
      author={Yang, J.},      
       title={The entropy conjecture for diffeomorphisms away from tangencies},
        date={2013},
     journal={Journal of the E.M.S.},
      volume={15},
      number={6},
       pages={2043\ndash 2060},
}

\bib{Saghin}{article}{
      author={Saghin, R.},
       title={Volume growth and entropy for $C^1$ partially hyperbolic diffeomorphisms},
        date={2014},
     journal={Discrete Contin. Dyn. Syst.},
      volume={34},
      number={9},
       pages={3789\ndash 3801},
}
\bib{SX1}{article}{
      author={Saghin, R.},
      author={Xia, Z.},      
       title={Geometric expansion, Lyapunov exponents and foliations},
        date={2009},
     journal={Ann. Inst. H. Poincar\'e Anal. Non Lin\'eaire},
      volume={26},
       pages={689\ndash 704},
}

\bib{SX2}{article}{
      author={Saghin, R.},
      author={Xia, Z.},      
       title={The entropy conjecture for partially hyperbolic diffeomorphisms with 1-D center},
        date={2010},
     journal={Topology Appl.},
      volume={157},
      number={1},
       pages={29\ndash 34},
}

\bib{SY}{article}{
      author={Saghin, R.},
      author={Yang, J.},      
       title={Continuity of topological entropy for perturbation of time-one maps of hyperbolic flows},
        date={2016},
     journal={Topology Appl.},
      volume={215},
       pages={857\ndash 875},
}

\bib{SV}{article}{
      author={Sun, W.},
      author={Vargas, E.},      
       title={Entropy of flows, revisited},
        date={1999},
     journal={Bol. Soc. Bra. Mat.},
      volume={39},
       pages={313\ndash 333},
}

\bib{Yang}{article}{
      author={Yang, J.},
       title={Topological entropy of Lorenz-like flows},
     journal={ArXiv e-prints},
      eprint={1412.1207},
}

\bib{Yomdin}{article}{
      author={Yomdin, Y.},
       title={Volume growth and entropy},
        date={1987},
     journal={Israel J. Math.},
      volume={57},
       pages={285\ndash 300},
}

\end{biblist}
\end{bibdiv}

\bigskip
\bigskip

\noindent {\scshape Mario Rold\'an}\\
Departamento de Matem\'atica, Universidade Federal de Santa Catarina\\
Campus Universit\'ario Trindade,\\
Florian\'opolis - SC - Brazil CEP 88.040-900 \\
\texttt{roldan@impa.br}

\medskip

\noindent {\scshape Radu Saghin}\\
Instituto de Matem\'atica, Pontificia Universidad Cat\'olica de Valpara\'iso\\
Blanco Viel 596, Cerro Bar\'on, Valpara\'iso, Chile.\\
\texttt{rsaghin@gmail.com}

\medskip

\noindent {\scshape Jiagang Yang}\\
Department of Mathematics, Southern University of Science and Technology of China, 1088 Xueyuan Rd., Xili, Nanshan District, Shenzhen, Guangdong, China 518055\\
Instituto de Matem\'atica e Estat\'istica, Universidade Federal Fluminense\\
Rua M\'ario Santos Braga s/n - Campus Valonguinhos. Niter\'oi, Brazil\\
\texttt{yangjg@impa.br}

\end{document}